\documentclass[12pt,reqno]{amsart}

\usepackage[T2A]{fontenc}
\usepackage[russian,english]{babel}

\usepackage[margin=1in]{geometry}
\usepackage{amsmath,amssymb,mathtools,esint}
\usepackage[expansion=false]{microtype}

\usepackage{tikz}
\usetikzlibrary{arrows.meta}
\usepackage{float}
\usepackage{enumitem}

\usepackage[
 colorlinks=true,
 linkcolor=blue,
 citecolor=blue,
 urlcolor=blue
]{hyperref}

\definecolor{rfblue}{RGB}{228,236,247}
\definecolor{rfgold}{RGB}{248,239,211}
\definecolor{rfgreen}{RGB}{226,242,232}
\definecolor{rfviolet}{RGB}{126,73,132}
\definecolor{rfteal}{RGB}{220,70,20}
\tikzset{
 outline/.style={draw=black,line width=.4pt,line cap=round,line join=round},
 body/.style={outline,shade,top color=rfblue!24,bottom color=rfblue!78},
 collar region/.style={shade,top color=rfgold!30,bottom color=rfgold!92},
 cap region/.style={outline,shade,top color=rfgreen!28,bottom color=rfgreen!90},
 loop/.style={draw=rfviolet,line width=.62pt,line cap=round,line join=round},
 shortcut/.style={draw=rfteal,line width=.62pt,line cap=round,line join=round},
 loop label/.style={text=rfviolet},
 shortcut label/.style={text=rfteal},
 hidden/.style={outline,dash pattern=on 4pt off 4pt},
 leader/.style={outline,dash pattern=on 1pt off 3pt},
 dashdot/.style={outline,dash pattern=on 4pt off 2pt on 1pt off 2pt},
 process/.style={outline,-{Latex[length=4pt,width=3pt]}},
 point/.style={circle,draw=black,fill=black,inner sep=0pt,minimum size=3pt},
 clear label/.style={fill=white,fill opacity=.92,text opacity=1,
 rounded corners=.5pt,inner sep=1.1pt},
 every node/.style={inner sep=1pt}
}
\newcommand{\systolepostbody}{%
 \draw[body]
 (.18,0)
 .. controls (.24,.72) and (.88,1.05) .. (1.65,1.04)
 .. controls (2.52,1.02) and (2.92,.66) .. (3.38,.47)
 .. controls (3.72,.31) and (4.05,.35) .. (4.35,.35)
 -- (5.20,.35)
 .. controls (6.02,.35) and (6.36,.16) .. (6.55,0)
 .. controls (6.36,-.16) and (6.02,-.35) .. (5.20,-.35)
 -- (4.35,-.35)
 .. controls (4.05,-.35) and (3.72,-.31) .. (3.38,-.47)
 .. controls (2.92,-.66) and (2.52,-1.02) .. (1.65,-1.04)
 .. controls (.88,-1.05) and (.24,-.72) .. (.18,0);
 \path[fill=white]
 (1.02,0) .. controls (1.25,.17) and (1.92,.17) .. (2.17,0)
 .. controls (1.92,-.17) and (1.25,-.17) .. cycle;
 \draw[outline]
 (1.02,0) .. controls (1.25,-.17) and (1.92,-.17) .. (2.17,0);
 \draw[hidden]
 (1.02,0) .. controls (1.25,.17) and (1.92,.17) .. (2.17,0);
 \path[collar region]
 (4.35,.35) -- (5.20,.35) -- (5.20,-.35) -- (4.35,-.35) -- cycle;
 \draw[cap region]
 (5.20,.35) .. controls (6.02,.35) and (6.36,.16) .. (6.55,0)
 .. controls (6.36,-.16) and (6.02,-.35) .. (5.20,-.35) -- cycle;
 \draw[outline] (4.35,.35) arc[start angle=90,end angle=270,
 x radius=.12,y radius=.35];
 \draw[hidden] (4.35,-.35) arc[start angle=-90,end angle=90,
 x radius=.12,y radius=.35];
 \draw[outline] (5.20,.35) arc[start angle=90,end angle=270,
 x radius=.11,y radius=.35];
 \draw[hidden] (5.20,-.35) arc[start angle=-90,end angle=90,
 x radius=.11,y radius=.35];%
}

\newcommand{\systoleprebody}{%
\draw[body]
(.18,0)
.. controls (.24,.72) and (.88,1.05) .. (1.65,1.04)
.. controls (2.52,1.02) and (2.92,.66) .. (3.38,.47)
.. controls (3.72,.31) and (4.05,.35) .. (4.35,.35)
-- (5.20,.35)
.. controls (5.64,.32) and (6.10,.16) .. (6.50,.07)
-- (6.50,-.07)
.. controls (6.10,-.16) and (5.64,-.32) .. (5.20,-.35)
-- (4.35,-.35)
.. controls (4.05,-.35) and (3.72,-.31) .. (3.38,-.47)
.. controls (2.92,-.66) and (2.52,-1.02) .. (1.65,-1.04)
.. controls (.88,-1.05) and (.24,-.72) .. (.18,0);

\path[collar region]
(4.35,.35) -- (5.20,.35)
-- (5.20,-.35) -- (4.35,-.35) -- cycle;

\path[shade,top color=black!2,bottom color=black!12]
(5.20,.35)
.. controls (5.64,.32) and (6.10,.16) .. (6.50,.07)
-- (6.50,-.07)
.. controls (6.10,-.16) and (5.64,-.32) .. (5.20,-.35)
-- cycle;

\draw[outline]
(4.35,.35) -- (5.20,.35)
.. controls (5.64,.32) and (6.10,.16) .. (6.50,.07)
-- (6.50,-.07)
.. controls (6.10,-.16) and (5.64,-.32) .. (5.20,-.35)
-- (4.35,-.35);

\path[fill=white]
(1.02,0) .. controls (1.25,.17) and (1.92,.17) .. (2.17,0)
.. controls (1.92,-.17) and (1.25,-.17) .. cycle;
\draw[outline]
(1.02,0) .. controls (1.25,-.17) and (1.92,-.17) .. (2.17,0);
\draw[hidden]
(1.02,0) .. controls (1.25,.17) and (1.92,.17) .. (2.17,0);

\draw[outline] (4.35,.35) arc[start angle=90,end angle=270,
x radius=.12,y radius=.35];
\draw[hidden] (4.35,-.35) arc[start angle=-90,end angle=90,
x radius=.12,y radius=.35];

\draw[outline] (5.20,.35) arc[start angle=90,end angle=270,
x radius=.11,y radius=.35];
\draw[hidden] (5.20,-.35) arc[start angle=-90,end angle=90,
x radius=.11,y radius=.35];%
}

\makeatletter
\@namedef{subjclassname@2020}{%
 \textup{2020} Mathematics Subject Classification}
\makeatother

\newtheorem{theorem}{Theorem}[section]
\newtheorem{proposition}[theorem]{Proposition}
\newtheorem{lemma}[theorem]{Lemma}
\newtheorem{corollary}[theorem]{Corollary}
\theoremstyle{remark}
\newtheorem{remark}[theorem]{Remark}

\newcommand{\B}{\mathbb B}
\newcommand{\R}{\mathbb R}
\newcommand{\RP}{\mathbb {RP}}
\newcommand{\CP}{\mathbb {CP}}
\newcommand{\HP}{\mathbb {HP}}
\newcommand{\Sym}{\operatorname{Sym}}
\newcommand{\sys}{\operatorname{sys}_1}
\newcommand{\Ric}{\operatorname{Ric}}
\newcommand{\Rm}{\operatorname{Rm}}

\newcommand{\interior}{\operatorname{int}}

\hypersetup{
 pdftitle={Small normal curvature and three-manifold topology},
 pdfauthor={Tsz-Kiu Aaron Chow and Jingbo Wan},
 pdfsubject={Sharp normal curvature bounds and three-manifold classification},
 pdfkeywords={normal curvature, systole, Ricci flow with surgery,
 real projective space, spherical space form}
}

\title{Small Normal Curvature and Three-Manifold Topology}
\author{Tsz-Kiu Aaron Chow}
\address{Department of Mathematics, Hong Kong University of Science and Technology, Hong Kong S.A.R., China}
\email{\href{chowtka@ust.hk}{chowtka@ust.hk}}
\author{Jingbo Wan}
\address{Laboratoire Jacques-Louis Lions de Sorbonne Universit\'e, 4 place Jussieu, Paris 75005, France}
\email{\href{jingbo.wan@sorbonne-universite.fr}{jingbo.wan@sorbonne-universite.fr}}
\date{}
\subjclass[2020]{53C42, 53E20, 57K30}
\keywords{normal curvature, systole, Ricci flow with surgery, real projective space, spherical space form}

\begin{document}

\begin{abstract}
For $m=2,3$, we prove that every smooth immersion $F:\RP^m\looparrowright\overline{\B}^{\,N}(1)$ satisfies $\kappa(F)^2\ge 2m/(m+1)$, with equality only for the Veronese embedding, up to congruence. We also prove that a closed, connected, orientable three-manifold admitting an immersion into a Euclidean unit ball with $\kappa(F)\le\sqrt{3/2}$ is diffeomorphic to $S^3$, $\RP^3$, or $S^2\times S^1$. All three possibilities occur, while $\kappa(F)<\sqrt{3/2}$ forces $X\cong S^3$. These results answer a question of Petrunin and prove a conjecture of Chodosh–Li concerning the normal curvature of three-manifolds. The key intrinsic input is the strict scalar--systolic inequality
\[
(\min_Y R_g)\sys(g)^2<6\pi^2
\]
for every spherical three-space form $Y$ with $|\pi_1(Y)|>2$. Its proof uses systolic monotonicity along Ricci flow with surgery. This strict inequality complements the sharp scalar--systolic inequality for $\RP^3$ of Bray--Brendle--Eichmair--Neves.
\end{abstract}

\maketitle

\section{Introduction}

Let $F:M^m\looparrowright\overline{\B}^{\,N}(1)\subset\R^N$ be a smooth immersion, let $g=F^*g_{\R^N}$, and let $A_F$ be its second fundamental form. We write
\[
 \kappa(F):=\sup_{p\in M}\sup_{|v|_g=1}|A_F(v,v)|.
\]
Minimizing this quantity for a prescribed topology belongs to Gromov's program on immersions with controlled curvatures \cite{GromovDesign,Gromov,GromovLectures}. For real projective space, the natural model is the Veronese embedding
\begin{equation}\label{eq:veronese}
 V_m:\RP^m\longrightarrow S\bigl(\Sym_0(m+1)\bigr),\quad
 V_m([q])=\sqrt{\frac{m+1}{m}}
 \left(qq^{\mathsf T}-\frac1{m+1}I_{m+1}\right),\quad q\in S^m.
\end{equation}
It satisfies $\kappa(V_m)^2=2m/(m+1)$. Petrunin proved its optimality for $\RP^2$ and asked whether the same holds for every $\RP^m$ \cite{PetruninVeronese,Petrunin}. We prove the first open case, $m=3$.

\begin{theorem}\label{thm:main}
For $m\in\{2,3\}$, every smooth immersion $F:(\RP^m,g)\looparrowright\overline{\B}^{\,N}(1)$ satisfies
\begin{equation}\label{eq:main}
 \kappa(F)^2\geq\frac{2m}{m+1}.
\end{equation}
Equality holds precisely for the Veronese embedding in \eqref{eq:veronese}, up to congruence.
% \footnote{more precisely, up to an isometry of the domain, an orthogonal transformation of the target, and a linear isometric inclusion $\Sym_0(m+1)\hookrightarrow\R^N$}
\end{theorem}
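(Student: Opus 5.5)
We plan to combine an extrinsic inequality, relating $\kappa(F)$ to intrinsic curvature and the systole, with the intrinsic scalar--systolic inequality of Bray--Brendle--Eichmair--Neves for $\RP^3$ (and Pu's inequality, or its curvature analogue, for $\RP^2$).

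\textbf{Step 1: scalar curvature.} For a unit vector $v$ at $p$, the Gauss equation gives
\[
\sec(v,w)=\langle A(v,v),A(w,w)\rangle-|A(v,w)|^2 .
\]
Averaging over the unit sphere in $T_pM$ yields the classical identity
\[
\fint_{S^{m-1}}|A(v,v)|^2\,dv=\frac{|H|^2+2|A|^2}{m(m+2)},
\]
together with $R=|H|^2-|A|^2$. A pointwise algebraic bound then estimates $R$ from above in terms of $\kappa^2$. We use this only where it is sharp; the main input from it is Step 2.

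\textbf{Step 2: systole.} Let $\gamma$ be a shortest noncontractible loop, with length $L=\sys(g)$ and arclength parametrization. Then $|\ddot\gamma|\ge|A(\dot\gamma,\dot\gamma)|$ holds in $\R^N$. Integrating $\frac{d^2}{ds^2}|\gamma|^2=2+2\langle\gamma,\ddot\gamma\rangle$ around the loop gives
\[
L=-\int_\gamma\langle\gamma,\ddot\gamma\rangle\,ds\le \int_\gamma|\ddot\gamma|\,ds .
\]
This bound is too weak alone. Instead we use the sectional-curvature route: the Gauss equation gives a relation $\kappa^2\ge$ (an intrinsic curvature quantity). By a Myers/Bonnet-type comparison along the closed curve, or via the normal-curvature diameter bound used by Petrunin, this controls $R_{\min}$ from above. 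In parallel, the bound $|\gamma|\le1$ with curvature at most $\kappa$ gives an upper bound on length for ``antipodal'' closed curves in $\RP^m$. The key estimate we aim for is
\[
(\min R)\,\sys^2\ \text{ bounded above in terms of }\kappa,
\]
combined with a lower bound on $\sys$ coming from the fact that the loop $\gamma$, traversed twice, is contractible. Concretely, we take the family of antipodal-type loops and apply the BBEN inequality $(\min R)\sys^2\le 12\pi^2$ (Pu's inequality when $m=2$). Equality in that inequality forces a round metric of curvature $1/3\cdot\kappa^2$-related constant.

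\textbf{Step 3: rigidity.} In the equality case, $g$ is round. Moreover $|A(v,v)|\equiv\kappa$ holds for all unit $v$, so the immersion is isotropic with constant normal curvature. We then apply the classical classification of isotropic minimal immersions of round spheres into spheres (Sakamoto, Do Carmo--Wallach). Since $|F|=1$ is forced, $F$ is minimal into $S^{N-1}$. The degree-2 harmonic map descending to $\RP^m$ is the Veronese embedding, which gives congruence.

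\textbf{Main obstacle.} We expect the hardest step to be the precise extrinsic inequality converting $\kappa\le\kappa_0$ and $F(M)\subset\overline{\B}$ into a lower bound on $\sys$ together with a lower bound on $\min R$ with the sharp constants. In particular, we need to show that a short noncontractible loop cannot exist when the ball constraint holds. We would first try integrating the curvature along $\gamma$ against the position vector, combining this with the Gauss-equation bound $R\ge m(m-1)\bigl(\tfrac{m+2}{\cdot}\bigr)$ derived by averaging.
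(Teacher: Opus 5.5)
There is a genuine gap: your proposal never establishes the inequality the proof depends on. To contradict Gauss--Bonnet plus Pu on $\RP^2$, i.e.\ $(\inf R_h)\sys(h)^2\le 2\pi^2$, or Bray--Brendle--Eichmair--Neves on $\RP^3$, i.e.\ $(\inf R_h)\sys(h)^2\le 6\pi^2$, you need a \emph{lower} bound $(\inf R_h)\sys(h)^2\ge m(m-1)\pi^2$ for some metric $h$ on $\RP^m$, strict when $\kappa^2<2m/(m+1)$. (The $\RP^3$ constant is $6\pi^2$, not $12\pi^2$. Their $12\pi$ bounds $\mathcal A\cdot\inf R$, and Pu's $\sys^2\le\frac{\pi}{2}\mathcal A$ converts it.)

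Your Steps~1--2 argue in the opposite direction: an upper bound on $R$, an upper bound on lengths of ``antipodal'' curves, and ``$(\min R)\sys^2$ bounded above.'' Your closing paragraph correctly names the needed lower bounds but supplies no mechanism for them. The induced metric $g$ also cannot play the role of $h$. The identity $R_g=\frac32|H|^2-\frac{m(m+2)}{2}\fint_{S_pM}|A(v,v)|^2\,dv$ gives a useful lower bound only where $|H|$ is large, and the ball constraint controls $H$ only in the integrated form $\int_M\langle F,H\rangle=-m\operatorname{vol}_g(M)$.

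The missing idea is the conformal metric $\widetilde g=\exp\bigl(-\frac{3m}{2(m-1)}|F|^2\bigr)g$.
\begin{itemize}
\item Since $\Delta|F|^2=2m+2\langle F^\perp,H\rangle$, the Laplacian term in the conformal formula contributes $3m\langle F^\perp,H\rangle$. This combines with $\frac32|H|^2$ into $\frac32|H+mF^\perp|^2-\frac{3m^2}{2}|F^\perp|^2$.
\item After writing $|F^\perp|^2=|F|^2-|F^\top|^2$, the leftover $|F^\top|^2$ coefficient is $\frac{3m^2(4-m)}{4(m-1)}\ge0$ for $m\le4$, so it can be dropped.
\item The Chodosh--Li bow estimate gives $|F|\ge 2/\kappa-1$, so $y=1-|F|^2\in[0,4(\kappa-1)/\kappa^2]$. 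A concavity argument in $y$, checked at the endpoints, then yields $R_{\widetilde g}\ge e^{3m/(2(m-1))}\frac{m(m-1)}{4}\kappa^2$. Equality holds only if $\kappa^2=2m/(m+1)$ and $|F|=1$.
\end{itemize}

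The systole bound does not come from contractibility of $\gamma$ traversed twice; your computation $L\le\int|\ddot\gamma|$ only recovers $\kappa\ge1$. It comes from Fenchel's theorem applied to the image of a shortest noncontractible geodesic, whose curvature is $|A(\dot\gamma,\dot\gamma)|\le\kappa$, giving $2\pi\le\kappa\sys(g)$. Then $\widetilde g\ge e^{-3m/(2(m-1))}g$, and multiplying the two bounds gives exactly $m(m-1)\pi^2$.

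Your Step~3 could serve as an alternative endgame to the paper's. The paper uses Fenchel equality to show each geodesic maps to a plane circle, then applies Little--Sakamoto's classification of planar geodesic immersions. Your route: from $H=-mF$, $|F|\equiv1$, and $g$ round of curvature $\kappa^2/4=m/(2(m+1))$, Takahashi's theorem forces degree-two spherical harmonics, and do~Carmo--Wallach rigidity in degree two gives the Veronese map. The degree must come from this curvature value, not from isotropy, since higher-degree standard minimal immersions are also isotropic. However, every input to Step~3 comes from equality cases you have not derived: the conformal estimate, $\frac32|H+mF^\perp|^2\ge0$, $\fint|A(v,v)|^2\le\kappa^2$, and the BBEN/Pu rigidity applied to $\widetilde g$. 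Without the conformal construction, ``$|F|=1$ is forced'' and ``$g$ is round'' remain unsupported.
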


We write $R$ for scalar curvature. For a closed Riemannian manifold $(M,h)$ with $\pi_1(M)\ne1$, let $\sys(M,h)$ denote the least length of a noncontractible closed curve; we write $\sys(h)$ when $M$ is understood.

The proof of Theorem~\ref{thm:main} reduces normal curvature to an intrinsic scalar--systolic inequality. For a closed immersed $M^m$ with induced metric $g$ and $\pi_1(M)\ne1$, set
\[
 \widetilde g=\exp\left(-\frac{3m}{2(m-1)}|F|^2\right)g.
\]
If $\kappa(F)^2\le2m/(m+1)$, the conformal scalar curvature formula and Fenchel's theorem give
\[
 \bigl(\inf_MR_{\widetilde g}\bigr)\sys(\widetilde g)^2
 \ge m(m-1)\pi^2,
\]
strictly when $\kappa(F)^2<2m/(m+1)$. For $\RP^2$, this systolic inequality follows from Gauss--Bonnet and Pu's theorem \cite{Pu}; for $\RP^3$, it follows from the Bray--Brendle--Eichmair--Neves theorem \cite{BBEN}. This proves the inequality in Theorem~\ref{thm:main}. At equality, $\widetilde g$ is round and $F$ is planar-geodesic; the classification of Little and Sakamoto then gives the Veronese embedding \cite{Little,Sakamoto}.

We next turn to closed orientable three-manifolds. We improve the threshold $\sqrt{4/3}$ in Chodosh--Li's differentiable sphere theorem \cite[Theorem~5]{ChodoshLi} to the sharp value $\sqrt{3/2}$ and prove their conjectured classification at the threshold \cite[Remark~3]{ChodoshLi}.

\begin{theorem}\label{thm:classification}
Let $X^3$ be closed, connected, and orientable. If, for some $N$, there is a smooth immersion $F:X\looparrowright\overline{\B}^{\,N}(1)$ with $\kappa(F)\le \sqrt{3/2}$, then $X\cong S^3$, $X\cong\RP^3$, or $X\cong S^2\times S^1$. All three possibilities occur. If $\kappa(F)<\sqrt{3/2}$, then $X\cong S^3$.
\end{theorem}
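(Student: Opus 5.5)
The proof rests on the conformal change already described in the introduction. Put $m=3$ and $\widetilde g=\exp(-\tfrac94|F|^2)g$. I will first redo the pointwise computation behind the displayed scalar--systolic estimate. Combining the Gauss equation with $|F|\le1$ should give $R_{\widetilde g}\ge 6\pi^2/\sys(\widetilde g)^2$ whenever $\pi_1(X)\ne1$. This needs three inputs: the conformal scalar curvature formula, the estimate $|A(v,v)|\le\kappa(F)$, and Fenchel's theorem applied to a shortest noncontractible loop. Indeed, each closed curve in $\R^N$ has total curvature at least $2\pi$, so its $g$-length is at least $2\pi/\kappa(F)$, and the conformal weight turns this into a lower bound on its $\widetilde g$-length. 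When $\kappa(F)<\sqrt{3/2}$ the inequality is strict, and in every case $\widetilde g$ has positive scalar curvature.

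The topology is then fixed in steps. Since $X$ is closed, orientable and carries positive scalar curvature, Perelman's work tells us $X$ is a connected sum of spherical space forms and copies of $S^2\times S^1$. I then bring in the systole: a spherical space form $Y$ with $|\pi_1(Y)|>2$ satisfies the strict inequality $(\min R)\sys^2<6\pi^2$ from the abstract. For $\RP^3$ the Bray--Brendle--Eichmair--Neves theorem gives $\le 6\pi^2$, with equality only for the round metric. The inequality also has to be controlled on connected sums. My plan is to run Ricci flow with surgery starting from $\widetilde g$ and use systolic monotonicity, namely that $(\min R)\sys^2$ does not decrease in the relevant sense. Following the flow until the manifold splits into pieces, I get a piece that is a nontrivial prime factor, which would violate the strict inequality, or I reach the equality case. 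If there are at least two nontrivial summands, or if some summand has $|\pi_1|>2$, the limiting inequality should come out strict on that summand and contradict the lower bound. I expect this to be the main obstacle. A noncontractible loop in a summand has to stay noncontractible in the whole manifold, so systoles of the pieces must be compared with the systole of $X$, and surgeries must not decrease the systole. Whatever the paper proves about systolic monotonicity is needed at exactly this point.

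This argument leaves $S^3$, $\RP^3$ and $S^2\times S^1$, and, by orientability, possibly $\RP^3\#\RP^3$ or sums with several $S^2\times S^1$ factors. The extra sums I would rule out through the equality analysis: equality in every step forces $\widetilde g$ to be round in the limit, which is incompatible with a neck. Under strict inequality, $\kappa<\sqrt{3/2}$, the case $\pi_1\ne1$ is already excluded, because then $(\min R)\sys^2>6\pi^2$ contradicts BBEN for $\RP^3$, contradicts the strict inequality for the other space forms, and for $S^2\times S^1$ contradicts the systolic bound coming from the monotonicity argument. Once $\pi_1(X)=1$, the Poincaré conjecture gives $X\cong S^3$.

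Realizability is handled case by case. The round unit sphere $S^3\subset\R^4$ has $\kappa=1$. For $\RP^3$ I take the Veronese embedding, which has $\kappa^2=3/2$. For $S^2\times S^1$ I would build an immersion in which a thin tubular piece joins copies of Veronese-type pieces, matching the pictures of caps and collars. I would look for this as an explicit construction at $\kappa=\sqrt{3/2}$, possibly a Veronese $\RP^3$ with a self-attached handle whose normal curvature is controlled by the large ambient dimension $N$. Among the three realizations this one I regard as the delicate part.
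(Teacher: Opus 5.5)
There is a genuine gap in the step that is supposed to leave only $S^3$, $\RP^3$, $S^2\times S^1$. The only information you extract from $F$ is $R_{\widetilde g}>0$ and $(\inf R_{\widetilde g})\sys(\widetilde g)^2\ge6\pi^2$, strict when $\kappa^2<3/2$. This cannot rule out $\#_k(S^2\times S^1)$ for $k\ge2$, nor $S^2\times S^1$ in the strict case. The product $S^2(1)\times S^1(\ell)$ has $R\equiv2$ and $\sys=2\pi\ell$. Gromov--Lawson connected sums of such long pieces keep $\min R$ bounded below independently of $\ell$, and every essential loop must still run once around a long circle. So these manifolds carry metrics with $(\min R)\sys^2$ arbitrarily large. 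There is no upper bound to contradict and no equality case to analyze. Any argument that uses only these two facts, Ricci flow with surgery included, cannot exclude them. In particular, two claims of yours are false: that for two summands ``the limiting inequality should come out strict'', and that $S^2\times S^1$ ``contradicts the systolic bound coming from the monotonicity argument''. Under the flow the $S^2$ factor pinches while the essential circle keeps its length, so $(\min R)\sys^2\to\infty$.

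The missing idea is to use more than scalar curvature. By Chodosh--Li's Proposition~2, the conformal metric has $\sec\ge0$, and $\sec>0$ when $\kappa^2<3/2$. Hamilton's classification then leaves only spherical space forms, $S^2\times S^1$, $\RP^3\#\RP^3$, and flat manifolds.
\begin{itemize}
\item Flat manifolds are excluded by positive scalar curvature (Bieberbach plus Schoen--Yau).
\item $\RP^3\#\RP^3$, and in fact any $X$ with an $\RP^3$ summand, is excluded directly by Proposition~\ref{prop:systolic}(ii) applied to its embedded $\RP^2$, together with its rigidity. No flow is needed here.
\item Spherical forms with $|G|>2$ are excluded by Theorem~\ref{thm:gap}, which is the only place Ricci flow with surgery enters.
\item In the strict case, $\sec>0$ and Hamilton's 1982 theorem make $X$ spherical at once. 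This is what excludes $S^2\times S^1$.
\end{itemize}

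Your realization of $S^2\times S^1$ also fails. The Gauss equation gives $\sec_g\le\kappa^2$, so a thin tube forces large $\kappa$. Moreover, adding a handle to a Veronese $\RP^3$ yields $\RP^3\#(S^2\times S^1)$, not $S^2\times S^1$. The correct example is the explicit immersion (3) of Chodosh--Li.

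Finally, the pointwise bound on $R_{\widetilde g}$ needs the lower bound $|F|^2\ge(2/\kappa-1)^2$ from the radial bow estimate in Lemma~\ref{lem:bow}, not just $|F|\le1$. Where $|F|$ is small, the Gauss-equation bound falls short of \eqref{eq:conformal}.
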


The standard inclusion $S^3\hookrightarrow\overline{\B}^{\,4}(1)$ has $\kappa=1$, the Veronese embedding $V_3$ in \eqref{eq:veronese} realizes $\RP^3$ with $\kappa=\sqrt{3/2}$, and the immersion in \cite[(3)]{ChodoshLi} realizes $S^2\times S^1$ with $\kappa=\sqrt{3/2}$. The orientability hypothesis in Theorem~\ref{thm:classification} is essential: the immersion in \cite[(3)]{ChodoshLi} is invariant under the free orientation-reversing involution $(x,t)\longmapsto(-x,t+\pi)$, and hence descends, with the same maximal normal curvature, to the quotient, the twisted $S^2$-bundle over $S^1$.

The key new intrinsic input is the following systolic strict gap for spherical space forms.

\begin{theorem}\label{thm:gap}
Let $Y=S^3/G$ be a spherical three-space form with $|G|>2$. Every smooth Riemannian metric $g$ on $Y$ satisfies
\[
 (\min_YR_g)\sys(g)^2<6\pi^2.
\]
\end{theorem}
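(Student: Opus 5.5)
The plan is to argue by contradiction using Ricci flow with surgery, as the abstract indicates. Both sides of the inequality are scale invariant, so normalize $\min_Y R_g=6$ and suppose $\sys(g)\ge\pi$. Run Ricci flow with surgery $g(t)$ from $g$. Since $Y=S^3/G$ is irreducible with finite fundamental group, the flow becomes extinct in finite time. The maximum principle applied to $\partial_tR\ge\Delta R+\tfrac23R^2$ gives $R_{\min}(t)\ge 6/(1-4t)$, and this survives surgery because surgery is performed only in high-curvature regions. Hence the extinction time satisfies $T\le 1/4$. The round model calibrates everything: on the unit round $S^3/G$ we have $g(t)=(1-4t)g_0$, $R\equiv6$, $T=1/4$, and $\sys(g_0)=2\pi/k$, where $k\ge2$ is the largest order of an element of $G$. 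For $|G|>2$ one either has $k\ge3$, or $G$ is noncyclic and then contains elements of order $4$.

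\textbf{Step 1: systolic monotonicity along smooth flow.} The key step is an inequality of the form
\[
\frac{d}{dt}\sys(g(t))^2\ \ge\ -4\pi^2
\]
in the barrier (lower Dini derivative) sense, valid while the flow is smooth. Equivalently, $\sys(g(t))^2+4\pi^2t$ is nondecreasing. This is exactly the rate realized by $\RP^3$, where $\sys^2=\pi^2(1-4t)$. To prove it, take a shortest noncontractible closed geodesic $\gamma$ at time $t$. Then $\frac{d}{dt}L(\gamma)=-\int_\gamma\Ric(\dot\gamma,\dot\gamma)\,ds$, so the task is to bound $\int_\gamma\Ric(\dot\gamma,\dot\gamma)$ from above by roughly $2\pi^2/L$. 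I would try to obtain this from the length-minimizing property of $\gamma$ in its free homotopy class. The second variation in the two normal directions, using test fields $\sin(\pi s/L)e_i$ along with the doubled curve $\gamma^2$ or a sweep-out argument, should bound the averaged normal Ricci term by $\pi^2/L^2$. This is the main obstacle. Pure second-variation bounds lose the right constant, so I expect to need the fact that $\gamma$ minimizes among all noncontractible loops, not just within its homotopy class, for instance by comparing it with a loop built from $\gamma$ and a short arc, in the spirit of Hamilton's area estimate for minimal disks.

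\textbf{Step 2: surgeries and extinction.} Next I would show that $\sys$ does not drop across surgery times. Surgeries cut along $\delta$-necks, which are essential or inessential $2$-spheres. Because $Y$ is prime with finite $\pi_1$, the fundamental group survives in exactly one component and all others are simply connected. Noncontractible loops in that component can be pushed off the high-curvature caps with length changing by at most $o(1)$ as the surgery parameters tend to $0$. Near the extinction time, the component carrying $\pi_1$ becomes a shrinking, nearly round space form, by the canonical-neighbourhood structure, so $\sys(g(t))\to0$ as $t\to T$. Combining this with Step 1 gives
\[
\pi^2\le\sys(g)^2\le 4\pi^2T\le\pi^2 .
\]
Every inequality is therefore an equality. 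In particular $T=1/4$, and the monotone quantity is constant.

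\textbf{Step 3: rigidity and strictness.} Equality $T=1/4$ forces $R_{\min}(t)=6/(1-4t)$ for all $t$. The strong maximum principle then gives $R\equiv 6/(1-4t)$, and the evolution equation for $R$ yields $|\Ric^\circ|\equiv0$. So $g$ is Einstein, hence round with $R=6$ in dimension three, and no surgeries occur. But for the round $S^3/G$ with $|G|>2$ we have $\sys=2\pi/k$ with $k\ge3$, or $k=4$ when $G$ is noncyclic. This gives $\sys^2<\pi^2$, contradicting $\sys(g)^2\ge\pi^2$, so the inequality in Theorem~\ref{thm:gap} is strict. This is where the hypothesis $|G|>2$ enters, and it explains why $\RP^3$, the BBEN equality case, is excluded.
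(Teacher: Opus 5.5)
Your architecture, monotonicity of $\sys^2+4\pi^2t$ through the surgery flow combined with $R_{\min}\ge 6/(1-4t)$, matches the paper's. However, the two load-bearing steps are not established.

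\textbf{Step~1.} You leave this step open, and the diagnosis is wrong: second variation alone gives the sharp constant, with no comparison to other loops needed. Because $Y$ is orientable, parallel transport of an orthonormal normal frame $(E_1,E_2)$ once around the systolic geodesic $\gamma$ (length $\ell$) is a rotation by some $\theta\in[-\pi,\pi]$. Then $(V_1,V_2)=(E_1,E_2)\operatorname{Rot}_{-\theta\tau/\ell}$ are periodic orthonormal normal fields with $|D_\tau V_i|=|\theta|/\ell$. Small periodic variations stay in the free homotopy class, so $I(V_i,V_i)\ge0$. Summing over $i$ gives $\int_\gamma\Ric(\dot\gamma,\dot\gamma)\,d\tau\le2\theta^2/\ell\le2\pi^2/\ell$. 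Hence $(s^2)'\ge-4\pi^2$ at differentiability times of the locally Lipschitz function $s^2$. Your fields $\sin(\pi\tau/\ell)e_i$ only control the weighted integral $\int\sin^2(\pi\tau/\ell)\Ric(\dot\gamma,\dot\gamma)\,d\tau$, which is why the constant seemed lost.

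\textbf{Step~2.} A length loss of $o(1)$ per surgery does not suffice. For a fixed flow the surgery parameters are fixed, and shrinking them changes the flow and possibly the number of surgeries. Moreover, your chain $\pi^2\le\sys(g)^2\le4\pi^2T\le\pi^2$ and the rigidity in Step~3 need \emph{exact} non-decrease.

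The missing observation is that pushing off a cap strictly shortens. An arc of a near-systolic post-surgery loop that enters $D_i$ crosses the collar $[0,L]\times S^2$ twice, so its length is at least $2(L-\eta)h$. Its endpoints on $\Sigma_i=\{L\}\times S^2$ can be joined inside $\Sigma_i$ by a path of length at most $(\pi\sqrt2+\eta)h$, which is shorter once $L>\pi/\sqrt2$ and $\delta$ is small. Together with $g(T^-)=g(T^+)$ on $U$ and injectivity of $\pi_1(U)$ into the presurgery component (van Kampen along spheres), this gives $s(T^+)\ge\limsup_{t\uparrow T}s(t)$ with no error.

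\textbf{Extinction.} The claim ``$\sys\to0$ at extinction'' is not automatic with surgery. The $Y$-component can be discarded at a singular time while its curvature is still finite (merely $>\rho^{-2}$), so its systole is only $O(\rho)$. The paper avoids this by finding a nonsingular time $t_*$ at which the $Y$-component has $\sec>0$, then continuing by Hamilton's smooth flow.

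\textbf{Your ending.} With these repairs, your Step~3 is a valid alternative to the paper's. Exact $T=1/4$ forces $R_{\min}\equiv6/(1-4t)$, since otherwise the comparison ODE restarted later would force earlier extinction. The strong maximum principle then makes $g$ round, contradicting $\sys<\pi$ for round $S^3/G$ with $|G|>2$. The paper instead propagates $\mu s^2\ge6\pi^2$ via $(\mu s^2)'\ge\frac23\mu(\mu s^2-6\pi^2)$. It then applies the same round-systole comparison to Hamilton's round limit rather than to $g$ itself.
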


Combining Theorem~\ref{thm:gap} with the sharp $\RP^3$ scalar--systolic inequality \cite{BBEN} gives the corresponding result for every non-simply connected spherical space form.

\begin{corollary}\label{cor:spherical}
If $Y$ is a non-simply connected spherical three-space form, then
\[
 (\min_YR_g)\sys(g)^2\le6\pi^2.
\]
Equality holds if and only if $Y\cong\RP^3$ and $g$ is round, up to scaling and diffeomorphism.
\end{corollary}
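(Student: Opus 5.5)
The plan is to split according to $|G|$: the case $|G|=2$ comes from the sharp theorem of Bray--Brendle--Eichmair--Neves, and the case $|G|>2$ from Theorem~\ref{thm:gap}.

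\emph{Inequality.} Let $Y=S^3/G$ with $G\neq 1$, and let $g$ be any metric on $Y$. If $\min_Y R_g\le 0$ the inequality is trivial, since $\sys(g)>0$. So assume $\min_Y R_g>0$.
\begin{itemize}
\item If $|G|=2$, then $Y\cong\RP^3$, because the only free $\mathbb Z_2$-action on $S^3$ up to conjugacy is the antipodal map. The BBEN theorem \cite{BBEN} then gives $(\min R_g)\sys(g)^2\le 6\pi^2$.
\item If $|G|>2$, Theorem~\ref{thm:gap} gives the strict inequality directly.
\end{itemize}

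\emph{Equality.} Suppose $(\min R_g)\sys(g)^2=6\pi^2$. The strictness in Theorem~\ref{thm:gap} excludes $|G|>2$, so $Y\cong\RP^3$. The rigidity part of the BBEN theorem then says that $g$ has constant sectional curvature, i.e.\ $g$ is round up to scaling. Pulled back to $S^3$, it is a constant-curvature metric, hence isometric to a multiple of the standard one; the deck involution is then an isometric free involution, which is conjugate to the antipodal map. This identifies $(Y,g)$ with a scaled standard $\RP^3$ up to diffeomorphism.

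For the converse, compute on $\RP^3=S^3/\{\pm1\}$ with the unit round metric. Here $R=6$, and the shortest noncontractible loop is the image of a half great circle, of length $\pi$. So $6\cdot\pi^2=6\pi^2$, and the left-hand side is scale invariant.

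The only point needing care is matching the hypotheses and rigidity conclusion of the BBEN theorem to the stated form: their statement is about $\RP^3$ with $\sys$ taken over noncontractible curves, which agrees with our definition. Everything else is immediate once Theorem~\ref{thm:gap} is available.
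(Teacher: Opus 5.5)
Your proposal is correct and follows the paper's proof: Theorem~\ref{thm:gap} disposes of $|G|>2$ and excludes equality there, the case $|G|=2$ is $\RP^3$ and is handled by Bray--Brendle--Eichmair--Neves (inequality and rigidity), and the round $\RP^3$ computation gives the converse. The only difference is cosmetic: the paper goes through Proposition~\ref{prop:systolic}(ii), which obtains the systolic form by multiplying the BBEN area bound $\mathcal A\inf R\le 12\pi$ with the Pu-type estimate $\frac{2}{\pi}\sys^2\le\mathcal A$, rather than quoting a systolic statement from \cite{BBEN} directly.
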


The same conformal estimate gives the following corollary.

\begin{corollary}\label{cor:spherical-immersion}
Let $Y=S^3/G$ be a non-simply connected spherical three-space form. Every smooth immersion $F:Y\looparrowright\overline{\B}^{\,N}(1)$ satisfies $\kappa(F)^2\geq 3/2$. Equality holds precisely when $Y\cong\RP^3$ and, after identifying $Y$ with $\RP^3$, $F$ is the Veronese embedding $V_3$ in \eqref{eq:veronese}, up to congruence. In particular, the inequality is strict if $|G|>2$.
\end{corollary}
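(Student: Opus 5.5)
The proof of Corollary~\ref{cor:spherical-immersion} runs the conformal reduction from the introduction with $m=3$, combined with Corollary~\ref{cor:spherical}. Let $F:Y\looparrowright\overline{\B}^{\,N}(1)$ be an immersion with induced metric $g$, and set
\[
 \widetilde g=\exp\left(-\tfrac94|F|^2\right)g .
\]
Since $\pi_1(Y)=G\ne1$, the systole $\sys(\widetilde g)$ is defined.

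Suppose, for contradiction or for the equality analysis, that $\kappa(F)^2\le 3/2$. The first ingredient is a pointwise bound on $R_{\widetilde g}$. By the Gauss equation, $R_g=|H|^2-|A|^2$. For a symmetric normal-valued form with $|A(v,v)|\le\kappa$ for all unit $v$, an algebraic inequality bounds $|H|^2-|A|^2$ from below in terms of $\kappa$. The conformal change formula adds terms in $\Delta|F|^2=6+2\langle H,F\rangle$ and $|\nabla |F|^2|^2$. Together these give
\[
 R_{\widetilde g}\ge 6\,e^{\frac94|F|^2}\cdot c(\kappa),
\]
with $c(3/2)=1$ after normalization, and with strict inequality at some point when $\kappa^2<3/2$.

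The second ingredient bounds the systole. A noncontractible loop $\gamma$ in $Y$ maps under $F$ to a closed curve in the unit ball whose curvature is at most $\kappa$, since its acceleration is $A(\dot\gamma,\dot\gamma)$ plus a tangential part. Fenchel's theorem then gives total curvature at least $2\pi$, so $L_g(\gamma)\ge 2\pi/\kappa$. Weighting this by the conformal factor along $\gamma$ and pairing it with the scalar curvature bound yields
\[
 (\min_Y R_{\widetilde g})\,\sys(\widetilde g)^2\ge 6\pi^2,
\]
strictly if $\kappa^2<3/2$.

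Corollary~\ref{cor:spherical} states that $(\min R)\sys^2\le 6\pi^2$ on any non-simply connected spherical space form. It immediately rules out $\kappa^2<3/2$, which proves the inequality $\kappa(F)^2\ge 3/2$. At equality $\kappa^2=3/2$, the rigidity part of Corollary~\ref{cor:spherical} forces $Y\cong\RP^3$ and $\widetilde g$ round. So when $|G|>2$ equality is impossible, which is the strictness claim; equivalently, Theorem~\ref{thm:gap} applies directly.

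When $Y\cong\RP^3$, equality can hold, and there we follow the rigidity argument already used for Theorem~\ref{thm:main}. Equality must hold in every estimate. Equality in Fenchel's theorem makes the systolic loops planar convex curves of constant curvature $\kappa$. Roundness of $\widetilde g$ then propagates equality in the algebraic Gauss estimate, so $F$ is planar-geodesic with $|A(v,v)|\equiv\kappa$. The Little--Sakamoto classification identifies $F$ with $V_3$ up to congruence.

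The main obstacle is the second ingredient. The pointwise conformal scalar-curvature estimate must be combined with the conformally weighted length of a Fenchel loop so that the exponential factors cancel exactly, producing the sharp constant $6\pi^2$ rather than a lossy one. Since the introduction asserts this estimate for general $m$, I would invoke it as established. The remaining care is in extracting strictness and tracking equality cases; the deep input is Corollary~\ref{cor:spherical}, via Theorem~\ref{thm:gap} and \cite{BBEN}.
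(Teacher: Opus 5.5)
Your proposal is correct and is essentially the paper's proof. Proposition~\ref{prop:bridge} with $m=3$, combined with Corollary~\ref{cor:spherical}, rules out $\kappa(F)^2<3/2$ and forces $Y\cong\RP^3$ at equality (hence strictness for $|G|>2$), after which the equality case of Theorem~\ref{thm:main} identifies $F$ with $V_3$; citing it directly is cleaner than your re-derivation, which omits the step $|F|\equiv1$ needed to pass from $\widetilde g$ round to $g$ round.

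Your sketch of the bridge estimate has two slips, harmless since you invoke it as established. First, the bound $|(F\circ\gamma)''|\le\kappa$ holds only along a $g$-geodesic, so use the $g$-systolic geodesic and then $\widetilde g\ge e^{-9/4}g$. Second, the pointwise bound is the constant $R_{\widetilde g}\ge\tfrac32\kappa^2e^{9/4}$, strict everywhere when $\kappa^2<3/2$. The exponentials therefore cancel trivially, and the real work lies in the pointwise estimate, not the systolic pairing.
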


\begin{remark}
In view of the strict gap in Theorem~\ref{thm:gap}, it would be interesting to determine the sharp scalar--systolic constant for lens spaces. For the homogeneous lens spaces $L(p,1)$, $p>2$, we suspect the extremal metric to be the Berger metric $g_B$ obtained from the round metric by scaling the Hopf-fiber direction by a factor $\sqrt2$. Thus we suspect that
\[
(\min_{L(p,1)}R_g)\sys(g)^2
\le
\frac{32\pi^2}{p^2},
\]
with equality if and only if $g$ is $g_B$, up to scaling and diffeomorphism.
\end{remark}

To prove Theorem~\ref{thm:classification}, the argument of \cite[Proposition~2]{ChodoshLi} produces a conformal metric with nonnegative sectional curvature, positive when $\kappa(F)^2<3/2$. Hamilton's classification \cite[Theorem~1.2]{Hamilton1986} leaves spherical space forms, compact quotients of $S^2\times\R$, and flat manifolds. The scalar--systolic estimate excludes the flat case, the projective-plane estimate \cite{BBEN} excludes $\RP^3\mathbin{\#}\RP^3$, and Theorem~\ref{thm:gap} excludes spherical quotients of order greater than two. This proves the classification. Below the threshold, positive sectional curvature and the strict scalar--systolic estimate force $X\cong S^3$.

The proof of Theorem~\ref{thm:gap} uses Ricci flow with surgery in dimension three. Hamilton's surgery program was carried out in dimension three by Perelman, with detailed treatments by Cao--Zhu, Morgan--Tian, and Kleiner--Lott \cite{Hamilton1995,Perelman2003,CaoZhu2006,MorganTian2007,KleinerLott2008}.  The proof of Theorem~\ref{thm:gap} uses this framework in a simple geometric way. On each smooth interval, Proposition~\ref{prop:smooth-systole} gives $(\sys^2)'\ge-4\pi^2$. At surgery, any portion of an essential loop entering a new cap can be replaced by a shorter path on an outer neck cross-section, so Proposition~\ref{prop:surgery-systole} prevents the systole from jumping down. Thus $\sys(g(t))^2+4\pi^2t$ is nondecreasing along the smooth flow and across surgery times. Together with the scalar curvature maximum principle, this carries the scalar--systolic lower bound to the positive sectional curvature slice supplied by Ricci flow with surgery; see Proposition~\ref{prop:ricci-input}(vi). Hamilton's theorem then gives a round limit \cite{Hamilton1982}; since every round spherical space form of order greater than two has systole strictly smaller than round $\RP^3$, the gap follows.

Our Ricci flow viewpoint is partly motivated by the rigidity argument of Bray--Brendle--Eichmair--Neves \cite{BBEN} and by Chodosh--Li's use of the Brendle--Schoen sphere theorem to derive a differentiable sphere theorem from a normal curvature bound \cite{ChodoshLi,BrendleSchoen2009}. The systolic part of the argument is also in the spirit of monotonicity methods in geometric flows with surgery: Perelman used a min--max least-area disk functional and showed that its differential inequality persists through Ricci flow surgery \cite{PerelmanExtinction}, while Colding--Minicozzi used the min--max width of two-sphere sweepouts to prove finite extinction \cite{ColdingMinicozziExtinction}; in mean curvature flow with surgery, Brendle established a modified Huisken-type monotonicity formula that remains monotone across surgery times \cite{BrendleSurgeryMonotonicity}. The conformal construction is related to work of Petrunin on tori \cite{PetruninTori} and of Chodosh--Li on $S^n\times S^1$ \cite{ChodoshLi}. Related normal curvature estimates were obtained by Raffaelli \cite{RaffaelliDomains,RaffaelliTori}; see also our earlier Veronese rigidity results for $\CP^m,\HP^m$ under additional intrinsic geometric assumptions \cite{ChowWan}.

\medskip

\textit{Acknowledgements}
T.-K. A. C. was supported by the Croucher Foundation and HKUST New Faculty start-up grants. J. W. was supported by ERC-2023-AdG 101141855 BLaHST. Part of this work was carried out during a visit by J. W. to the HKUST Department of Mathematics, which he thanks for its hospitality.

\medskip

\textit{Disclosure of AI use.} The authors used ChatGPT (GPT-5.6 Sol) and Claude (Fable 5) solely for supporting tasks, including literature searches, exploration of ideas, and symbolic calculations. The authors independently verified all calculations and proofs and take full responsibility for the mathematical content.

\section{The normal curvature--systolic bridge}
\label{sec:bridge}

The first ingredient is a bow type estimate.

\begin{lemma}\label{lem:bow}
For every closed immersed manifold in the unit ball, $\kappa\geq1$.
If $\pi_1(M)\ne1$, then
\begin{equation}\label{eq:bow-sys}
 \sys(g)\geq\frac{2\pi}{\kappa}.
\end{equation}
If $\kappa<2$, then
\begin{equation}\label{eq:radial-bow}
 |F^\perp|\geq1+\frac{\kappa}{2}(|F|^2-1),
 \qquad
 \left(\frac2\kappa-1\right)^2\leq|F|^2\leq1.
\end{equation}
\end{lemma}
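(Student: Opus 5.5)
The three claims all come from applying the basic curvature identity along unit-speed geodesics of $(M,g)$, viewed as curves in $\R^N$. Let $\gamma$ be a unit-speed $g$-geodesic and set $c=F\circ\gamma$. Then $\ddot c=A_F(\dot\gamma,\dot\gamma)$, so the curvature of $c$ as a space curve satisfies $|\ddot c|\le\kappa$.

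\emph{First claim, $\kappa\ge1$.} Take $p\in M$ maximizing $|F|^2$, so $r:=|F(p)|\le1$. The function $f=|F|^2$ has $\Delta_g$-type second derivatives along geodesics:
\[
\tfrac{d^2}{ds^2}|c|^2=2|\dot c|^2+2\langle c,\ddot c\rangle=2+2\langle c,A(\dot\gamma,\dot\gamma)\rangle .
\]
At a maximum this is $\le0$, so $\langle F(p),A(v,v)\rangle\le-1$ for every unit $v$. Hence $\kappa\ge 1/r\ge1$.

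\emph{Systolic bound \eqref{eq:bow-sys}.} Let $\gamma$ be a shortest noncontractible loop; it is a closed geodesic of length $L=\sys(g)$. Then $c$ is a closed curve in $\R^N$ with total curvature $\int|\ddot c|\le\kappa L$. Fenchel's theorem gives total curvature $\ge2\pi$, hence $L\ge2\pi/\kappa$. The only subtlety is that $c$ is a genuine closed $C^1$ curve, which holds since $\gamma$ is a smooth closed geodesic.

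\emph{Radial bound \eqref{eq:radial-bow}.} This is the step I expect to need the most care. My plan is a comparison (``bow lemma'') argument. Fix $p$ and a unit $v\in T_pM$, and follow the geodesic $c(s)$ with $c(0)=F(p)$, $\dot c(0)=dF(v)$. Since $|\ddot c|\le\kappa$ and the curve must stay in the unit ball for all $s$, compare it with a circle of radius $1/\kappa$. Schur's/bow lemma says that for $s\le\pi/\kappa$, the chord $|c(s)-c(0)|$ is at least the circle chord $\tfrac{2}{\kappa}\sin(\kappa s/2)$.

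Choose $v$ so that $dF(v)$ is aligned with the tangential component $F^\top$ of $F(p)$, pointing outward, and project onto a suitable direction. More concretely, I will use the differential inequality
\[
\phi''\ge 2-2\kappa|c|,\qquad \phi=|c|^2,
\]
together with $|c|\le1$. Combined with the initial data $\phi(0)=|F|^2$ and $\phi'(0)=2\langle F,dF(v)\rangle=2|F^\top|$, this forces $\phi$ to exceed $1$ unless $|F^\top|$ is small.

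The cleaner route I would try first is to compare with the circle directly. The curve starting at $x=F(p)$ with velocity $u$ and curvature $\le\kappa$ cannot stay inside the ball if the full circle of radius $1/\kappa$ tangent to $u$, with center at $x+\kappa^{-1}n$, leaves the ball. Here $n$ is the unit vector that best bends the curve back inward. By Schur comparison, the curve's point at arclength $\pi/\kappa$ is at distance $\ge 2/\kappa$ from $x$ along the appropriate direction. Optimizing over $u$ tangent, the requirement that the ball of radius $1/\kappa$ around $x+\kappa^{-1}n$ is reachable yields $|x-(\text{center})|$ type constraints. Working this out gives
\[
1\ge \bigl|x+\tfrac{2}{\kappa}n\bigr|\quad\text{for the worst }n\perp u,
\]
i.e.\ $|x|^2-\tfrac4\kappa|x^\perp|+\tfrac4{\kappa^2}\le1$ after choosing $u$ along $x^\top$. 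This rearranges to
\[
|F^\perp|\ge1+\tfrac{\kappa}{2}\Bigl(|F|^2-1\Bigr)+\tfrac{1}{\kappa}-\dots .
\]
I will need to adjust the constants to match \eqref{eq:radial-bow} exactly; the identity $\tfrac{\kappa}{4}\bigl(|x|^2-1\bigr)+\tfrac1\kappa\le |x^\perp|$ type formula should be the target.

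Finally, $|F^\perp|\le|F|$ combined with the first inequality gives a quadratic inequality in $|F|$, namely $\tfrac\kappa2|F|^2-|F|+1-\tfrac\kappa2\le0$. Its roots are $1$ and $\tfrac2\kappa-1$, so $|F|\ge\tfrac2\kappa-1$, which is positive when $\kappa<2$. This yields the second part of \eqref{eq:radial-bow}.
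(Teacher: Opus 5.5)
Your proofs of $\kappa\ge1$ and of \eqref{eq:bow-sys} are fine. For $\kappa\ge1$ you use a second-derivative argument at a farthest point, which even gives $\kappa\ge1/\max|F|$; the paper instead integrates $\Delta|F|^2/2=m+\langle F,H\rangle$. For \eqref{eq:bow-sys} you use the paper's Fenchel argument. Your final quadratic step to $|F|\ge\frac2\kappa-1$ is also the paper's. The gap is the first inequality in \eqref{eq:radial-bow}, $|F^\perp|\ge1+\frac\kappa2(|F|^2-1)$. The paper takes it from Lemma~12 of Chodosh--Li, and you do not prove it. You note yourself that your constants do not match, and getting the right constants is the whole content of the inequality.

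Neither of your routes can be repaired by bookkeeping. The estimate $\phi''\ge2-2\kappa|c|$ rests on $\langle c,\ddot c\rangle\ge-\kappa|c|$, which is too lossy. With $|c|\le1$ it only gives $|F^\top|^2\le(\kappa-1)(1-|F|^2)$ and no lower bound on $|F|$. Even keeping $|c|=\sqrt\phi$, the inequality alone cannot exclude $F(p)=0$ when $\kappa>3/2$. Indeed, the solution of $\phi''=2-2\kappa\sqrt\phi$ with $\phi(0)=\phi'(0)=0$ conserves $\frac12\phi'^2-2\phi+\frac{4\kappa}{3}\phi^{3/2}=0$ and turns back at $\phi=9/(4\kappa^2)<1$.

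The circle comparison is not a valid step. Schur's bow lemma bounds the chord $|c(s)-x|$ from below but says nothing about its direction. Also, the bending direction $\ddot c=A_F(\dot\gamma,\dot\gamma)$ is imposed by $F$, not chosen by you. So $|x+\frac2\kappa n|\le1$ does not follow. Even granting it, it gives $|x^\perp|\ge\frac1\kappa+\frac\kappa4(|x|^2-1)$, which is strictly weaker than the target: at $|x|=1$ it yields $1/\kappa$ instead of $1$.

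The missing idea is that $\ddot c$ is normal to $M$, so only $c^\perp$ pairs with it, and $|c^\perp|^2\le|c|^2-\langle c,\dot c\rangle^2$.
\begin{itemize}
\item Set $a=\langle c,\dot c\rangle$ and $w=\sqrt{|c|^2-a^2}$. Then $a'=1+\langle c,\ddot c\rangle\ge1-\kappa w$.
\item Set $G=\frac\kappa2(1-|c|^2)+w-1$. Then $G$ is nonincreasing while $a\ge0$: where $w>0$ one has $G'=a\bigl(\frac{1-a'}{w}-\kappa\bigr)\le0$, and since $w$ is Lipschitz its zeros are harmless.
\item Launch the geodesic in the direction of $F^\top$ (any direction if $F^\top=0$). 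Then $a(0)=|F^\top|$ and $w(0)=|F^\perp|$, so the claim is exactly $G(0)\ge0$.
\item Suppose $G(0)<0$. Then $a$ never returns to $0$: at $a=0$ one has $G=(1-|c|)\bigl(\frac\kappa2(1+|c|)-1\bigr)<0$, which forces $|c|<\frac2\kappa-1$ and hence $a'>\kappa-1\ge0$.
\item So $|c|^2$ increases to some $\phi_\infty\le1$. Since $\int_0^\infty a<\infty$, there are times with $a\to0$; evaluating $G$ along them gives $\sqrt{\phi_\infty}<\frac2\kappa-1$.
\item Because $w\le|c|\le\sqrt{\phi_\infty}$, this gives $a'\ge1-\kappa\sqrt{\phi_\infty}>0$ for all $s\ge0$, contradicting $a\le1$.
\end{itemize}
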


\begin{proof}
The identities $\Delta|F|^2/2=m+\langle F,H\rangle$ and $|H|\leq m\kappa$ give
\[
 0=\int_M\Delta\frac{|F|^2}{2}
 =m\operatorname{vol}_g(M)+\int_M\langle F,H\rangle
 \geq m(1-\kappa)\operatorname{vol}_g(M),
\]
hence $\kappa\geq1$. If $\pi_1(M)\ne1$, let $\gamma$ be a shortest noncontractible closed geodesic and set $x=F\circ\gamma$. Then $|x'|=1$ and $x''=A_F(\gamma',\gamma')$. Fenchel's theorem \cite{Milnor} implies \eqref{eq:bow-sys}, since
\[
 2\pi\leq\int_0^{\sys(g)}|x''(t)|\,dt
 \leq\kappa\sys(g).
\]
Next, we note that the first estimate in \eqref{eq:radial-bow} follows from \cite[Lemma~12]{ChodoshLi}. Since $|F^\perp|\leq|F|\leq1$, the first estimate gives
$|F|\geq1+\frac{\kappa}{2}(|F|^2-1)$, hence $0\geq(|F|-1)\left(\frac{\kappa}{2}(|F|+1)-1\right)$. Since $|F|\leq1$, we obtain $|F|\geq2/\kappa-1$, and therefore $\left(2/\kappa-1\right)^2\leq|F|^2\leq1$. This proves \eqref{eq:radial-bow}.
\end{proof}

To derive the conformal scalar curvature estimate, we first record a
calculus lemma.

\begin{lemma}\label{lem:conformal-calculus}
Let $m\geq2$ and $1\leq \kappa^2\leq\frac{2m}{m+1}$. Then for $0\leq y\leq\frac{4(\kappa-1)}{\kappa^2}$,
\begin{align}\label{eq:conformal-calculus}
\frac m2\bigl(3m-(m+2)\kappa^2\bigr) +\frac{3m^2}{2}y -\frac{3m^2(m-4)_+}{4(m-1)} \left( (\kappa-1)y-\frac{\kappa^2}{4}y^2 \right) 
\geq 
\frac{m(m-1)}4\kappa^2 e^{\frac{3m}{2(m-1)}y}.
\end{align}
Equality holds only if $\kappa^2=\frac{2m}{m+1}$ and $y=0$.
\end{lemma}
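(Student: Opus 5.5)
The plan is to view the difference of the two sides of \eqref{eq:conformal-calculus} as a function of $y$. It turns out to be concave, so it suffices to check the inequality at the two endpoints $y=0$ and $y=Y:=4(\kappa-1)/\kappa^2$, and the quadratic correction term vanishes at both endpoints.

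\textbf{Step 1: concavity.} Put $a=\frac{3m}{2(m-1)}$ and $c=\frac{3m^2(m-4)_+}{4(m-1)}$. Let $D(y)$ be the left side of \eqref{eq:conformal-calculus} minus the right side. Then
\[
D''(y)=\frac{c\kappa^2}{2}-\frac{m(m-1)}4\kappa^2a^2e^{ay}.
\]
Using $e^{ay}\ge 1$, the second term is at least $\frac{9m^3\kappa^2}{16(m-1)}$. The first term is at most $\frac{6m^2(m-4)\kappa^2}{16(m-1)}$. Since $9m>6(m-4)$, we get $D''<0$ on $[0,Y]$, so $D$ is strictly concave there. Hence $D\ge\min\{D(0),D(Y)\}$ on $[0,Y]$, and if both endpoint values are nonnegative, equality somewhere forces equality at an endpoint.

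\textbf{Step 2: the endpoint $y=0$.} A direct rearrangement gives
\[
\frac m2\bigl(3m-(m+2)\kappa^2\bigr)-\frac{m(m-1)}4\kappa^2=\frac{3m(m+1)}4\Bigl(\frac{2m}{m+1}-\kappa^2\Bigr).
\]
Thus $D(0)\ge0$, with equality iff $\kappa^2=\frac{2m}{m+1}$. I will call the identity above $(\ast)$. If $\kappa=1$ then $Y=0$, and this step already finishes the proof (strictly).

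\textbf{Step 3: the endpoint $y=Y$, the main step.} At $y=Y$ the $c$-term vanishes, because
\[
(\kappa-1)Y-\frac{\kappa^2}{4}Y^2=\frac{4(\kappa-1)^2}{\kappa^2}-\frac{4(\kappa-1)^2}{\kappa^2}=0.
\]
Using $(\ast)$ to absorb the constant term, it suffices to prove, for $\kappa>1$,
\[
\frac{3m^2}2Y>\frac{m(m-1)}4\kappa^2\bigl(e^{aY}-1\bigr).
\]
Substitute $x=aY=\frac{6m(\kappa-1)}{(m-1)\kappa^2}$, so that $6mY=4(m-1)x$. The inequality then becomes
\[
\frac{e^x-1}{x}<\frac4{\kappa^2}.
\]
The left side is increasing in $x$. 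Also $x$ is increasing in $\kappa$ on $[1,2)$, since $(\kappa-1)/\kappa^2$ is. The right side is decreasing in $\kappa$. So the worst case is the threshold $\kappa^2=\frac{2m}{m+1}$, where $4/\kappa^2=2(m+1)/m$.

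This reduces everything to an explicit one-parameter inequality in $m\ge2$. Numerically:
\begin{itemize}
\item $m=2$: $x\approx1.39$, ratio $\approx2.17$, versus $3$.
\item $m=3$: ratio $\approx2.13$, versus $2.67$.
\item $m\to\infty$: $x\to 3(\sqrt2-1)\approx1.243$, ratio $\approx1.98$, versus $2$.
\end{itemize}
Large $m$ is the tightest regime, and this is the step I expect to be the obstacle. There I would bound $x$ above by an explicit decreasing function of $m$, for instance by writing $\kappa=\sqrt{2m/(m+1)}$ and using $a=\frac32+\frac{3}{2(m-1)}$. I would control $e^x$ by a Taylor bound $e^x\le1+x+\frac{x^2}2+\frac{x^3}6e^{x_0}$. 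Then I would verify finitely many small $m$ directly and treat large $m$ by asymptotic comparison of the two sides.

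\textbf{Step 4: equality.} By Step 3, $D(Y)>0$ whenever $Y>0$. By concavity from Step 1, $D>0$ on $(0,Y]$. So equality can only occur at $y=0$, and by Step 2 only when $\kappa^2=\frac{2m}{m+1}$.
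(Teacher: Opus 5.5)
Your argument follows the paper's proof step for step. Both use strict concavity of the difference and the value $\frac{3m}{4}\bigl(2m-(m+1)\kappa^2\bigr)$ at $y=0$. Both note that the $(m-4)_+$ term vanishes at the right endpoint, and both reduce to $(e^x-1)/x<4/\kappa^2$. One small slip in Step 1: the first term of $D''$ is $\frac{6m^2(m-4)_+\kappa^2}{16(m-1)}$. Without the positive part, your upper bound is false for $m<4$, although $D''<0$ still holds.

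Your worst case also simplifies. At $\kappa_m^2=\frac{2m}{m+1}$ one has $\kappa_m^2-1=\frac{m-1}{m+1}$, so $x_m=\frac{3(m+1)}{m-1}(\kappa_m-1)=\frac{3}{1+\kappa_m}$. What remains is therefore exactly the paper's inequality \eqref{eq:exp-kappa},
\[
e^{3/(1+\kappa)}<1+\frac{12}{(1+\kappa)\kappa^2},
\]
which the paper only asserts as a direct calculation.

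The gap is that this last inequality is still unproved, and your proposed method for it would fail. The inequality is very tight as $m\to\infty$. At $x=3\sqrt2-3\approx1.2426$ one has $e^x-1\approx2.4648$ against $2x\approx2.4853$, a margin below one percent. The estimate $e^x\le1+x+\frac{x^2}2+\frac{x^3}6e^{x_0}$ loses far more than that. Even with $x_0=x$ it gives only $\frac{e^x-1}{x}\le1+\frac x2+\frac{x^2}6e^{x}\approx2.51$ in the limit, where the target is $2$. It already fails at $m=3$, giving $\approx2.84$ against $8/3$. So "asymptotic comparison" with that bound cannot close the large-$m$ regime.

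One way to finish is as follows.
\begin{enumerate}[label=(\arabic*)]
\item Put $u=3/(1+\kappa)\in[3\sqrt2-3,\frac32]$. Then $\kappa=(3-u)/u$, and the inequality reads $\psi(u):=1+\frac{4u^3}{(3-u)^2}-e^u>0$.
\item Compute $\psi'(u)=\frac{4u^2(9-u)}{(3-u)^3}-e^u$. The first term is increasing on $(0,3)$ and equals $6+2\sqrt2$ at $u=3\sqrt2-3$, while $e^u\le e^{3/2}<5$ on the interval. Hence $\psi$ is increasing there.
\item Everything therefore reduces to the single number $\psi(3\sqrt2-3)=6\sqrt2-5-e^{3\sqrt2-3}$.
\item To show this is positive, take the Taylor polynomial of degree five plus the tail bound $\sum_{k\ge6}\frac{x^k}{k!}\le\frac{x^6}{6!}\bigl(1-\frac x7\bigr)^{-1}$. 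This gives $e^{3\sqrt2-3}<3.465<3.485<6\sqrt2-5$.
\end{enumerate}
Your values $u_m=3/(1+\kappa_m)$ all lie in $[3\sqrt2-3,\frac32]$, so this proves your Step 3, strictly, for every $m\ge2$. With that, your Step 4 goes through as written.
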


\begin{proof}
Set $a=\frac{3m}{2(m-1)}$, $b=\frac{3m^2(m-4)_+}{4(m-1)}$ and
$D=\frac{m(m-1)}4$. Denote the left hand side in
\eqref{eq:conformal-calculus} by
\[
P(y) = \frac m2\bigl(3m-(m+2)\kappa^2\bigr) +\frac{3m^2}{2}y -b\left((\kappa-1)y-\frac{\kappa^2}{4}y^2\right).
\]
Now we prove $P(y)\geq D\kappa^2e^{ay}$. Define $f(y)=P(y)-D\kappa^2e^{ay}$. Since $P''(y)=\frac{b\kappa^2}{2}$, we have
\begin{align*}
f''(y)= \frac{b\kappa^2}{2}-a^2D\kappa^2e^{ay}\leq \kappa^2\left(\frac{b}{2} - a^2 D\right)= \frac{3m^2\kappa^2}{16(m-1)} \bigl(2(m-4)_+-3m\bigr)<0 .
\end{align*}
Hence $f$ is strictly concave, and its minimum is attained at an endpoint.

At $y=0$,
\begin{align*}
f(0) = \frac m2\bigl(3m-(m+2)\kappa^2\bigr)-D\kappa^2 = \frac{3m}{4}\bigl(2m-(m+1)\kappa^2\bigr)\geq0 .
\end{align*}

At the other endpoint $y_0=\frac{4(\kappa-1)}{\kappa^2}$, the quadratic term vanishes since $(\kappa-1)y_0-\frac{\kappa^2}{4}y_0^2=0$. Writing $z_0=ay_0=\frac{6m(\kappa-1)}{(m-1)\kappa^2}$, we compute using $2m-(m+1)\kappa^2\geq0$
\begin{align*}
f(y_0)
&= \frac m2\bigl(3m-(m+2)\kappa^2\bigr) +\frac{3m^2}{2}y_0 -D\kappa^2e^{z_0}\\
&= D\kappa^2 \left( 1+ \frac{3(2m-(m+1)\kappa^2)} {(m-1)\kappa^2} +\frac{4z_0}{\kappa^2} -e^{z_0} \right)\\
&\geq D\kappa^2 \left( 1+\frac{4z_0}{\kappa^2}-e^{z_0} \right).
\end{align*}
We claim that $f(y_0)\geq 0$. A direct calculation shows that for $1\leq\kappa\leq\sqrt{2}$, one has
\begin{equation}\label{eq:exp-kappa}
e^{\frac{3}{1+\kappa}}
\leq
1+\frac{12}{(1+\kappa)\kappa^2}.
\end{equation}
On the other hand, the assumption $\kappa^2\leq\frac{2m}{m+1}$ implies $z_0 =\frac{6m(\kappa-1)}{(m-1)\kappa^2} \leq \frac{3}{1+\kappa}$. Putting this into \eqref{eq:exp-kappa} gives $e^{z_0}\leq1+\frac{4z_0}{\kappa^2}$, and hence $f(y_0)\geq 0$. The strict concavity of $f$ now gives $f(y)\geq0$ on the whole interval. The equality statement follows from the strict inequality at the right endpoint and the equality case at $y=0$.
\end{proof}

The following conformal change is analogous to Petrunin's work \cite{PetruninTori} on $T^n$ and Chodosh--Li's work \cite{ChodoshLi} on $S^n\times S^1$.

\begin{proposition}\label{prop:bridge}
Let $\widetilde g= \exp\left(-\frac{3m}{2(m-1)}|F|^2\right)g$. If $\kappa^2\leq\frac{2m}{m+1}$, then
\begin{equation}\label{eq:conformal}
 R_{\widetilde g} \geq \exp\left(\frac{3m}{2(m-1)}\right) \frac{m(m-1)}4\kappa^2 .
\end{equation}
Equality at a point in \eqref{eq:conformal} implies $\kappa^2=\frac{2m}{m+1}$ and $|F|^2=1$ there. If $\pi_1(M)\ne1$, then
\begin{equation}\label{eq:bridge}
 \bigl(\inf_MR_{\widetilde g}\bigr)\sys(\widetilde g)^2
 \ge m(m-1)\pi^2,
\end{equation}
strictly when $\kappa^2<2m/(m+1)$.
\end{proposition}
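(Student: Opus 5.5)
Write $\widetilde g=e^{2u}g$ with $u=-\frac a2|F|^2$ and $a=\frac{3m}{2(m-1)}$. The plan is to use the conformal scalar curvature formula in the form
\[
R_{\widetilde g}=e^{-2u}\Bigl(R_g-2(m-1)\Delta u-(m-2)(m-1)|\nabla u|^2\Bigr),
\]
express every term through the immersion, and then reduce to Lemma~\ref{lem:conformal-calculus} with $y=1-|F|^2$. Using $\Delta|F|^2=2m+2\langle F,H\rangle$ and $\nabla|F|^2=2F^\top$, we get
\[
-2(m-1)\Delta u=3m\bigl(m+\langle F,H\rangle\bigr),\qquad |\nabla u|^2=a^2|F^\top|^2 .
\]
By Gauss, $R_g=|H|^2-|A|^2$.

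The central pointwise algebraic estimate bounds $|H|^2-|A|^2+3m\langle F,H\rangle$ from below in terms of $\kappa$ and $|F^\perp|$. Diagonalize along a normal direction and use that $|A(v,v)|\le\kappa$ for all unit $v$. The standard estimate (as in Chodosh--Li / Petrunin) should give
\[
|H|^2-|A|^2\ \ge\ -\tfrac{m(m+2)}{2}\kappa^2+\text{(terms in $H$)}.
\]
Completing the square with $3m\langle F^\perp,H\rangle$ costs a multiple of $|F^\perp|^2$. Write $|F^\perp|^2=|F|^2-|F^\top|^2$. The resulting $|F^\top|^2$ contributions, including the gradient term $-(m-2)(m-1)a^2|F^\top|^2$, must be absorbed, and in the stated form they should cancel or have a favorable sign precisely because $a$ was chosen to match. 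The target is the numerator
\[
\tfrac m2\bigl(3m-(m+2)\kappa^2\bigr)+\tfrac{3m^2}{2}y-(\text{correction}),
\]
where $y=1-|F|^2\ge0$. The correction term $\frac{3m^2(m-4)_+}{4(m-1)}\bigl((\kappa-1)y-\frac{\kappa^2}{4}y^2\bigr)$ in the lemma should come from using \eqref{eq:radial-bow}. There $|F^\perp|\ge 1-\frac\kappa2 y$, which gives $|F^\top|^2=|F|^2-|F^\perp|^2\le (1-y)-(1-\frac\kappa2y)^2=(\kappa-1)y-\frac{\kappa^2}{4}y^2$. This controls $|F^\top|^2$ for $m>4$, and the range $y\le 4(\kappa-1)/\kappa^2$ is exactly the constraint from \eqref{eq:radial-bow}. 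Multiplying by $e^{-2u}=e^{a|F|^2}=e^{a}e^{-ay}$ and applying Lemma~\ref{lem:conformal-calculus} then yields \eqref{eq:conformal}. Equality forces equality in the lemma, so $\kappa^2=\frac{2m}{m+1}$ and $y=0$, that is, $|F|=1$.

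For \eqref{eq:bridge}, compare systoles. Since $|F|\le1$, the conformal factor satisfies $e^{-a|F|^2}\ge e^{-a}$. Hence every curve has $\widetilde g$-length at least $e^{-a/2}$ times its $g$-length, and so $\sys(\widetilde g)^2\ge e^{-a}\sys(g)^2\ge e^{-a}4\pi^2/\kappa^2$ by \eqref{eq:bow-sys}. Multiplying by the bound on $\inf R_{\widetilde g}$ gives
\[
\bigl(\inf_MR_{\widetilde g}\bigr)\sys(\widetilde g)^2\ \ge\ e^{a}\tfrac{m(m-1)}4\kappa^2\cdot e^{-a}\tfrac{4\pi^2}{\kappa^2}=m(m-1)\pi^2 .
\]
For strictness when $\kappa^2<2m/(m+1)$, note that the pointwise inequality \eqref{eq:conformal} is then strict everywhere. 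Since $M$ is compact, the infimum is attained, so $\inf R_{\widetilde g}$ strictly exceeds the constant and the product inequality is strict.

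The main obstacle is the pointwise algebraic step. One has to show that $|H|^2-|A|^2$, the cross term $3m\langle F,H\rangle$ and the gradient term combine into exactly the left side of Lemma~\ref{lem:conformal-calculus}, with the coefficients consistent with the choice of $a$. I would first try the bound $|A|^2\le\ldots$ via the identity $\sum_{i,j}|A(e_i,e_j)|^2$ expressed through $A(v,v)$ over averaged unit vectors, i.e.\ polarization. Then I would optimize over the normal component of $H$ parallel to $F^\perp$ and check the coefficient bookkeeping against the case $m=3$, the Veronese case with $y=0$.
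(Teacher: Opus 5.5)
Your outline follows the paper's proof step for step, and those steps are correct: the conformal formula with $u=-\frac a2|F|^2$, the identities for $\Delta|F|^2$ and $\nabla|F|^2$, completing the square in $H$, writing $|F^\perp|^2=|F|^2-|F^\top|^2$, using \eqref{eq:radial-bow} to control $|F^\top|^2$ when $m>4$, Lemma~\ref{lem:conformal-calculus} with $y=1-|F|^2$, then the systole comparison and compactness for strictness.

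The gap is the pointwise step you defer as ``should give''. It is the real content of \eqref{eq:conformal}. With the ``terms in $H$'' left unspecified, neither the completed square nor the $|F^\top|^2$ coefficient is determined, and your first suggested route would fail. What is needed is an exact identity, Petrunin's averaged Gauss formula, which the paper quotes. The fourth-moment formula $\fint_{S^{m-1}}v_iv_jv_kv_l\,dv=\frac{\delta_{ij}\delta_{kl}+\delta_{ik}\delta_{jl}+\delta_{il}\delta_{jk}}{m(m+2)}$ gives $\fint_{S_pM}|A_F(v,v)|^2\,dv=\frac{|H|^2+2|A|^2}{m(m+2)}$, hence
\[
R_g=|H|^2-|A|^2=\tfrac32|H|^2-\tfrac{m(m+2)}{2}\fint_{S_pM}|A_F(v,v)|^2\,dv\ \ge\ \tfrac32|H|^2-\tfrac{m(m+2)}{2}\kappa^2 .
\]
Diagonalizing one normal component at a time cannot replace this in higher codimension. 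For $V_3$ one has $|A(v,v)|\equiv\kappa$, yet $|A|^2=\frac{27}{4}>m\kappa^2=\frac92$, so the codimension-one bound $|A|^2\le m\kappa^2$ fails. The averaging is what produces both the constant $-\frac{m(m+2)}{2}\kappa^2$ you wrote and the coefficient $\frac32$ in front of $|H|^2$.

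That coefficient is what makes the bookkeeping close, and it closes differently from what you guessed. Since $2(m-1)a=3m$,
\[
\tfrac32|H|^2+3m\langle F^\perp,H\rangle=\tfrac32|H+mF^\perp|^2-\tfrac{3m^2}{2}|F^\perp|^2 ,
\]
and the square term vanishes exactly on the Veronese, where $H=-mF$. That is the sense in which $a$ is matched. The tangential terms do not cancel. Their total coefficient is $\frac{3m^2}{2}-(m-2)(m-1)a^2=\frac{3m^2(4-m)}{4(m-1)}$, which is favorable only for $m\le4$; this is why the $(m-4)_+$ correction appears. Dropping the square gives
\[
e^{2u}R_{\widetilde g}\ \ge\ \tfrac m2\bigl(3m-(m+2)\kappa^2\bigr)+\tfrac{3m^2}{2}y+\tfrac{3m^2(4-m)}{4(m-1)}|F^\top|^2 ,
\]
and from there your argument goes through as written. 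You should also record $\kappa\ge1$ from Lemma~\ref{lem:bow}, since Lemma~\ref{lem:conformal-calculus} assumes $1\le\kappa^2$. Likewise note $\kappa^2\le\frac{2m}{m+1}<4$, so that \eqref{eq:radial-bow} applies.
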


\begin{proof}
First, the fourth moment computation on unit tangent sphere and the Gauss equation imply average Gauss equation; see \cite[2.1. Gauss formula]{PetruninTori}
\begin{equation*}
 R_g=\frac32|H|^2-\frac{m(m+2)}2\text{\CYRZH},\qquad
 \text{\CYRZH}:=\fint_{S_pM}|A_F(v,v)|^2\,dv .
\end{equation*}

Set $u=-\frac{3m}{4(m-1)}|F|^2$, so that $\widetilde g=e^{2u}g$. The conformal change formula gives
\[
R_{\widetilde g} = e^{-2u} \left( R_g-2(m-1)\Delta u-(m-1)(m-2)|\nabla u|^2 \right).
\]
Using $\Delta|F|^2=2m+2\langle F^\perp,H\rangle$ and $|\nabla|F|^2|^2=4|F^\top|^2$, we obtain
\begin{align*}
e^{2u}R_{\widetilde g}
&= R_g +\frac{3m}{2}\Delta|F|^2 -\frac{9m^2(m-2)}{16(m-1)} |\nabla|F|^2|^2\\
&= 3m^2-\frac{m(m+2)}2 \text{\CYRZH} +\frac32|H|^2 +3m\langle F^\perp,H\rangle -\frac{9m^2(m-2)}{4(m-1)} |F^\top|^2 .
\end{align*}
Completing the square in the mean curvature term gives
\[
\frac32|H|^2+3m\langle F^\perp,H\rangle = \frac32|H+mF^\perp|^2 -\frac{3m^2}{2}|F^\perp|^2 .
\]
Therefore,
\begin{align}
e^{2u}R_{\widetilde g}
&= 3m^2-\frac{m(m+2)}2 \text{\CYRZH} \notag +\frac32|H+mF^\perp|^2 -\frac{3m^2}{2}|F^\perp|^2 -\frac{9m^2(m-2)}{4(m-1)} |F^\top|^2 \notag\\
&= 3m^2-\frac{m(m+2)}2 \text{\CYRZH} -\frac{3m^2}{2}|F|^2 +\frac32|H+mF^\perp|^2 +\frac{3m^2(4-m)}{4(m-1)}|F^\top|^2 \notag\\
&\geq 3m^2-\frac{m(m+2)}2\kappa^2 -\frac{3m^2}{2}|F|^2 +\frac{3m^2(4-m)}{4(m-1)}|F^\top|^2 . \label{eq:defect}
\end{align}

Set $y=1-|F|^2$. Lemma~\ref{lem:bow} gives $0\leq y\leq 1-\left(\frac2\kappa-1\right)^2= \frac{4(\kappa-1)}{\kappa^2}$ and $|F^\perp|\geq1-\frac{\kappa}{2}y$. Furthermore,
\begin{align*}
|F^\top|^2=
1-y-|F^\perp|^2\leq
1-y-\left(1-\frac{\kappa}{2}y\right)^2
=(\kappa-1)y-\frac{\kappa^2}{4}y^2 .
\end{align*}
If $m\leq4$, the coefficient of $|F^\top|^2$ is nonnegative and the last term can be discarded. If $m>4$, then the coefficient is negative, and the above estimate gives
\[
\frac{3m^2(4-m)}{4(m-1)}|F^\top|^2
\geq
-\frac{3m^2(m-4)}{4(m-1)}
\left(
(\kappa-1)y-\frac{\kappa^2}{4}y^2
\right).
\]
Thus in all cases, use Lemma~\ref{lem:conformal-calculus}
\begin{align*}
e^{2u}R_{\widetilde g}
&\geq
\frac m2\bigl(3m-(m+2)\kappa^2\bigr)
+\frac{3m^2}{2}y
-\frac{3m^2(m-4)_+}{4(m-1)}
\left((\kappa-1)y-\frac{\kappa^2}{4}y^2\right)\\
&\geq
e^{\frac{3m}{2(m-1)}y}
\frac{m(m-1)}4\kappa^2 .
\end{align*}
Finally, multiplying by $e^{-2u}=\exp(\frac{3m}{2(m-1)}|F|^2)$ gives \eqref{eq:conformal}. The equality assertion follows from the equality assertion in Lemma~\ref{lem:conformal-calculus}.

If $\pi_1(M)\ne1$, then $\widetilde g\ge \exp\left(-\frac{3m}{2(m-1)}\right)g$ and Lemma~\ref{lem:bow} give
\[
 \sys(\widetilde g)^2
 \ge \exp\left(-\frac{3m}{2(m-1)}\right)\sys(g)^2
 \ge \exp\left(-\frac{3m}{2(m-1)}\right)
 \frac{4\pi^2}{\kappa^2}.
\]
Multiplying this with \eqref{eq:conformal} proves \eqref{eq:bridge}. If $\kappa^2<2m/(m+1)$, the pointwise estimate \eqref{eq:conformal} is strict; compactness makes the infimum strict as well.
\end{proof}

Next, we record the sharp scalar–systolic inequalities.
\begin{proposition}\label{prop:systolic}\

\begin{enumerate}[label=(\roman*)]
\item Every metric $h$ on $\RP^2$ satisfies
\[
 (\inf_{\RP^2}R_h)\sys(h)^2\le2\pi^2.
\]
Equality holds only for a round metric, up to scaling and diffeomorphism.
\item If a closed three-manifold $(M,h)$ contains a smoothly embedded $\RP^2$, then
\[
 (\inf_MR_h)\sys(M,h)^2\le6\pi^2.
\]
Equality holds only when $(M,h)$ is a round $\RP^3$, up to scaling and diffeomorphism.
\end{enumerate}
\end{proposition}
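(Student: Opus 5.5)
Both parts are recorded from the literature, so the proof will reduce each to a known sharp theorem and check that the normalizations agree.

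For (i), I will use Gauss--Bonnet together with Pu's systolic inequality. On a surface $R_h=2K$. Gauss--Bonnet on $\RP^2$ gives $\int_{\RP^2}K\,dA=2\pi\chi(\RP^2)=2\pi$. Hence $(\inf R_h)\operatorname{area}(h)\le 2\int K\,dA=4\pi$, with equality only if $K$ is constant. Pu's theorem \cite{Pu} states $\sys(h)^2\le\frac{\pi}{2}\operatorname{area}(h)$, with equality exactly for round metrics. If $\inf R_h\le0$ there is nothing to prove. Otherwise, multiplying the two bounds gives
\[
(\inf R_h)\sys(h)^2\le\frac{\pi}{2}(\inf R_h)\operatorname{area}(h)\le 2\pi^2 .
\]
At equality, both Pu and Gauss--Bonnet are sharp, so $h$ is round. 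As a sanity check, the round $\RP^2$ of curvature $1$ has $R=2$ and systole $\pi$, which gives exactly $2\pi^2$.

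For (ii), I will invoke Bray--Brendle--Eichmair--Neves \cite{BBEN}. Their theorem says that if $(M^3,h)$ contains an embedded $\RP^2$ and $R_h\ge 6$ (the normalization with $\inf R_h>0$), then the least area $\mathcal A$ of an embedded $\RP^2$ satisfies $\mathcal A\,\inf R_h\le 12\pi$. Equality forces $(M,h)$ to be isometric to round $\RP^3$ of curvature $\inf R_h/6$. If $\inf R_h\le0$, the claim is trivial.

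The next step is to pass from area to systole. I take an area-minimizing embedded $\RP^2$, call it $\Sigma$, which exists and is smooth by the minimization theory used in \cite{BBEN}. A one-sided embedded $\RP^2$ in a three-manifold is $\pi_1$-injective in the relevant sense: the generator of $\pi_1(\Sigma)$ is noncontractible in $M$. Indeed, $\Sigma$ is dual to a nonzero class in $H^1(M;\mathbb Z_2)$, and a curve in $\Sigma$ generating $\pi_1(\Sigma)$ meets $\Sigma$ an odd number of times after a transverse pushoff along the nonorientable normal bundle. Thus $\sys(M,h)\le\sys(\Sigma,h|_\Sigma)$.

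Stability of $\Sigma$, as in \cite{BBEN} via the Schoen--Yau rearrangement with the test function $1$, gives
\[
\int_\Sigma \bigl(\tfrac12 R_h - K_\Sigma + \tfrac12|A|^2\bigr)\le 0 ,
\]
hence $\frac12(\inf R_h)\operatorname{area}(\Sigma)\le\int_\Sigma K=2\pi$. Combining this with Pu on $\Sigma$ yields
\[
(\inf R_h)\sys(M)^2\le\frac{\pi}{2}(\inf R_h)\operatorname{area}(\Sigma)\le 2\pi^2 .
\]
That bound is weaker than needed by a factor of $3$. So the sharp constant must instead come from the full BBEN inequality $\operatorname{area}(\Sigma)\le 12\pi/\inf R_h$, which is stronger than the stability bound $4\pi/\inf R_h$ only by using the three-dimensional structure. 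With BBEN the product becomes
\[
(\inf R_h)\sys^2\le\frac{\pi}{2}\cdot 12\pi=6\pi^2 .
\]
The round check confirms the constant: round $\RP^3$ of curvature $1$ has $R=6$ and systole $\pi$, giving $6\pi^2$.

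For the equality case in (ii), both Pu on $\Sigma$ and BBEN must be sharp, and the BBEN rigidity statement then gives round $\RP^3$. I expect the main obstacle to be precisely this bookkeeping: confirming that BBEN's theorem is stated with $\inf R$ rather than with a normalization $R\ge6$, and that the curve achieving $\sys(\Sigma)$ remains essential in $M$. If that injectivity argument is delicate, I would fall back on a mod-$2$ intersection argument with a pushed-off copy of $\Sigma$.
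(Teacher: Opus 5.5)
Your proof is the paper's: (i) is Gauss--Bonnet times Pu, and (ii) multiplies $\frac{2}{\pi}\sys(M,h)^2\le\mathcal A(M,h)$ with the BBEN bound $\mathcal A(M,h)\inf_MR_h\le12\pi$, with rigidity from BBEN. Your derivation of the first inequality is also sound: apply Pu on an embedded $\RP^2$ whose generator stays essential in $M$. That holds by your mod-$2$ intersection count when the $\RP^2$ is one-sided, and because the generator is orientation-reversing in $M$ when it is two-sided (possible only if $M$ is nonorientable). No minimizer is needed, since one can take the infimum directly.

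The only wrong passage is your unused stability detour, and you should delete it. The constant $1$ is not a section of the nontrivial normal bundle of a one-sided $\RP^2$. The resulting claim $(\inf R_h)\operatorname{area}(\Sigma)\le4\pi$ fails for the totally geodesic $\RP^2$ in round $\RP^3$, where the product is $12\pi$. So the bound $2\pi^2$ would have been stronger, not weaker, than $6\pi^2$, and it is false.
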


\begin{proof}
Both assertions are immediate if $\inf R_h \le 0$. For (i), multiplying Gauss--Bonnet, $(\inf R_h)\operatorname{area}_h(\RP^2)\le 4\pi$, with Pu's inequality \cite{Pu}, $\sys(h)^2\le \frac{\pi}{2}\operatorname{area}_h(\RP^2)$, gives the bound; equality forces $h$ to be round. For (ii), let $\mathcal A(M,h)$ be the least area of an embedded projective plane. Multiplying the two inequalities $\frac2\pi\sys(M,h)^2 \le \mathcal A(M,h)$ and $\mathcal A(M,h)\inf_M R_h \le 12\pi$ from \cite[Theorem~1]{BBEN} yields the conclusion, with rigidity given by \cite[Theorem~2]{BBEN}.
\end{proof}

\section{Systolic estimates via Ricci flow with surgery}\label{sec:ricci-input}

\subsection{Geometric setup and notation}\label{sec:setup}

Let $Y=S^3/G$ be a spherical space form with $|G|>2$, and a smooth Riemannian metric $k$. The manifold $Y$ is orientable and irreducible.

We use the following notation.
\begin{itemize}[leftmargin=2em]
\item \emph{Flow and singular times.}
We write $g(0)=k$ and use $g(t)$ for a three-dimensional Ricci flow with $(r,\delta)$-cutoff in the sense of \cite[Definition~73.1]{KleinerLott2008}. Set $t_0=0$, and write $0<t_1<t_2<\cdots$ for the singular times, indexed by $1\le j\le j_{\max} \in\mathbb N_0\cup\{\infty\}$. If $j_{\max}<\infty$, set $t_{j_{\max}+1}=\infty$. Let $M_0=Y$, and for $1\le j\le j_{\max}$ let $M_j$ be the postsurgery manifold at $t_j$. 
At a singular time $T=t_j$, the open set on which the curvature remains bounded as $t\uparrow T$ is denoted by $\Omega(T)$. The metrics converge smoothly on compact subsets of $\Omega(T)$ to $g(T^-)$, and we write $\mathcal M_T^-=(\Omega(T),g(T^-))$, $\mathcal M_T^+=(M_j,g(T^+))$.

\item \emph{Scalar quantities.}
Let $\mu_0:=\min_YR_k$. When $\mu_0>0$, set
\[
 b(t):=\frac{\mu_0}{1-\frac23\mu_0t},
 \qquad
 T_0:=\frac3{2\mu_0}.
\]

\item \emph{The $Y$-component and its systole.}
As long as a component diffeomorphic to $Y$ occurs, it is denoted by $Y(t)$ at a nonsingular time and by $Y(T^+)$ after surgery. We set
\[
 s(t)=\sys(Y(t),g(t)),
 \qquad
 s(T^+)=\sys(Y(T^+),g(T^+)),
 \qquad
 s_0=\sys(k).
\]

\item \emph{Surgery scale and cylindrical model.}
At a singular time $T$, we write $\delta=\delta(T)$, $\rho=\delta(T)r(T)$, and $h=h(T)$ for the surgery scale. In the notation of \cite[(73.2)]{KleinerLott2008},
\[
 \Omega_\rho=\{x\in\Omega(T):R(x,T^-)\le\rho^{-2}\}.
\]
The cylindrical model used in \cite[Definition~58.1]{KleinerLott2008} is $g_{\rm cyl}=2g_{S^2(1)}+dz^2$, which has scalar curvature one and cross-sectional diameter $\pi\sqrt2$.

\item \emph{Neck and cap regions.}
We fix $L>\pi/\sqrt2$ and write $\eta=\eta(\delta)$ for the error in \eqref{eq:omega} below. In the coordinates of \cite[Lemma~72.24]{KleinerLott2008}, surgery leaves $[0,\delta^{-1})\times S^2$ unchanged, and $D_i$ is the added three-disk bounded by $\{0\}\times S^2$. Set
\[
 B_i=D_i\cup\bigl([0,L]\times S^2\bigr),
 \qquad
 \Sigma_i=\partial B_i=\{L\}\times S^2 .
\]

\end{itemize}

\subsection{Ricci flow with surgery}

The next proposition collects the Ricci flow results used below.

\begin{proposition}\label{prop:ricci-input}
Assume that $k$ is normalized in the sense of \cite[Definition~77.1]{KleinerLott2008}. There are positive nonincreasing functions $r$ and $\delta$ and a Ricci flow with $(r,\delta)$-cutoff starting from $(Y,k)$ with the following properties.
\begin{enumerate}[label=(\roman*)]
\item The flow is defined for all $t\ge0$, with the time slices taken to be empty after extinction. Every bounded time interval contains only finitely many singular times.

\item At a singular time $T$, the metrics converge smoothly to $g(T^-)$ on compact subsets of $\Omega(T)$. Under the surgery identification,
\[
 X:=\mathcal M_T^+\setminus\bigcup_i\interior D_i
 \Subset\Omega(T),
 \qquad g(T^+)|_X=g(T^-)|_X.
\]

\item If $Y(T^+)$ exists, there are finitely many pairwise disjoint three-balls $B_i\subset Y(T^+)$ with $D_i\subset\interior B_i$, and
\[
 U:=Y(T^+)\setminus\bigcup_i\interior D_i
\]
is connected and compactly contained in $\Omega(T)$. Thus $U=X\cap Y(T^+)$ and $U\cap B_i=B_i\setminus\interior D_i=[0,L]\times S^2$. If $\Sigma_i=\partial B_i$, then
\begin{equation}\label{eq:neck-lengths}
 \operatorname{diam}_{(\Sigma_i,g(T^+)|_{\Sigma_i})}(\Sigma_i)
 \le(\pi\sqrt2+\eta)h,
 \qquad
 \inf_\sigma L_{g(T^+)}(\sigma)\ge(L-\eta)h,
\end{equation}
where the infimum is over paths $\sigma\subset B_i\setminus\interior D_i$ joining $\partial D_i$ to $\Sigma_i$, and $\eta=\eta(\delta)\to0$. Moreover,
\begin{equation}\label{eq:neck-inequality}
 2(L-\eta)>\pi\sqrt2+\eta.
\end{equation}

\item For some $0\le j_0\le j_{\max}$, each $M_j$ ($0\le j\le j_0$) has a unique component $M_j^{(0)}\cong Y$ with all other components diffeomorphic to $S^3$, while for $j_0<j\le j_{\max}$ all components are diffeomorphic to $S^3$. If $j_0<j_{\max}$, $M_{j_0}^{(0)}$ goes extinct at $t_{j_0+1}$.

\item If $\mu_0>0$, then
\[
 R(\cdot,t)\ge b(t)\quad(t<T_0),
 \qquad R(\cdot,T^+)\ge b(T)\quad(T<T_0),
\]
at every nonsingular time $t$ and every singular time $T$, respectively. The flow is empty for $t\ge T_0$.

\item If $\mu_0>0$, there is a nonsingular time $t_*<T_0$, before the component diffeomorphic to $Y$ ceases to occur, such that $\sec_{g(t_*)}>0$ on $Y(t_*)$.
\end{enumerate}
\end{proposition}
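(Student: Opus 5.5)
This proposition collects standard facts, so the plan is to verify each item by citing Kleiner--Lott's treatment of Perelman's Ricci flow with surgery. Only items (iii), (iv), and (vi) need some independent argument.

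For (i) and (ii) I would invoke the existence theorem for Ricci flow with $(r,\delta)$-cutoff from normalized initial data \cite[Proposition~77.2]{KleinerLott2008}. It provides nonincreasing positive $r,\delta$, defines the flow for all time, and gives finitely many surgery times in bounded intervals. The structure of the surgery (\cite[Definition~73.1, Lemma~72.24]{KleinerLott2008}) says that after surgery the metric agrees with $g(T^-)$ away from the added caps $D_i$. Here $X$ lies in $\Omega_\rho\Subset\Omega(T)$.

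For (iii) I would use the neck coordinates of \cite[Lemma~72.24]{KleinerLott2008}. On $[0,\delta^{-1})\times S^2$ the rescaled metric $h^{-2}g$ is $\eta$-close in $C^k$ to $g_{\rm cyl}$, with $\eta(\delta)\to0$. The cross-section $\{L\}\times S^2$ then has intrinsic diameter at most $(\pi\sqrt2+\eta)h$, and any path from $\{0\}\times S^2$ to $\{L\}\times S^2$ has length at least $(L-\eta)h$. Since $L>\pi/\sqrt2$, we have $2L>\pi\sqrt2$, and shrinking $\delta$ gives \eqref{eq:neck-inequality}. The caps for distinct necks are disjoint because the $\epsilon$-horns are disjoint; the balls $B_i$ stay disjoint provided $L\ll\delta^{-1}$.

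Connectedness of $U$ follows because removing open balls from a connected three-manifold leaves it connected. The statement $U=X\cap Y(T^+)$ is then a matter of definitions.

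For (iv) I would argue topologically. Each surgery is a connected sum decomposition, possibly followed by discarding components that are known to be diffeomorphic to $S^3$, $S^2\times S^1$, $\RP^3\#\RP^3$, or spherical space forms, or that go extinct \cite[Section~73]{KleinerLott2008}. Because $\pi_1(Y)$ is finite, $Y$ is irreducible and prime. Hence at each surgery exactly one piece can carry the fundamental group, namely a component diffeomorphic to $Y$, and all other pieces are $S^3$. A component diffeomorphic to $S^2\times S^1$ cannot arise, since it would contribute a $\mathbb Z$ free factor to $\pi_1$.

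Once no component is diffeomorphic to $Y$, the same argument shows that every remaining component is $S^3$. Discarding a $Y$-component at a surgery corresponds to its extinction at $t_{j_0+1}$.

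For (v) I would apply the maximum principle to $\partial_tR\ge\Delta R+\tfrac23R^2$, which gives $R\ge b(t)$ on smooth intervals. Surgery does not decrease $\min R$: on $X$ the metric is unchanged, and on the caps \cite[Lemma~72.24]{KleinerLott2008} gives $R\ge h^{-2}(1-\eta)$, which is much larger than $b(T)$ because the surgery scale is small and $h^{-2}\gg r^{-2}$. Since $b$ blows up at $T_0$, the flow must be empty for $t\ge T_0$.

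For (vi), which I expect to be the main obstacle, I would use Perelman's finite-extinction picture, which holds here since $\pi_1$ is finite. Near its extinction time, the $Y$-component is covered by canonical neighborhoods. A component that goes extinct while carrying the topology of a spherical space form $Y$ with $|G|>2$ must, just before extinction, be entirely $\epsilon$-close to a rescaled compact $\kappa$-solution. This is because necks and caps would only produce $S^3$, $S^2\times S^1$, or $\RP^3\#\RP^3$ \cite[Lemma~67.11, Section~80]{KleinerLott2008}.

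Compact $\kappa$-solutions have positive sectional curvature, by Hamilton--Ivey pinching together with the strong maximum principle. Closeness in $C^2$ then yields $\sec>0$ at a nonsingular time $t_*$ slightly before extinction. Finally, $t_*<T_0$ follows from (v).

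The delicate part is making sure the chosen time is nonsingular and that the closeness is global on the component. I would take $t_*$ in the last smooth interval before $t_{j_0+1}$, where $R$ is uniformly large on $Y(t)$.
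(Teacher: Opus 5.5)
Your overall route is the paper's. You use Kleiner--Lott for existence and the surgery structure, the neck comparison at scale $h$ for (iii), a topological argument for (iv), the maximum principle for (v), and the canonical-neighborhood picture near extinction for (vi). Your Kneser--Milnor argument in (iv) is a self-contained substitute for the paper's citation of Dinkelbach--Leeb, Corollary~5.3(ii). The step that does not go through as written is the surgery step in (v). You conclude that the cap curvature exceeds $b(T)$ ``because the surgery scale is small and $h^{-2}\gg r^{-2}$''. But nothing you say bounds $b(T)$ from above, and $b(T)\to\infty$ as $T\uparrow T_0$. The missing observation is this: if anything survives the surgery at $T$, a retained component contains a point $q\in\Omega_\rho$. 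The presurgery bound $R(\cdot,T^-)\ge b(T)$ on $\Omega(T)$ then gives $b(T)\le R(q,T^-)\le\rho^{-2}$. Combine this with a universal cap bound $R(\cdot,T^+)\ge c_{\rm cap}h^{-2}$ on $D_i$, with $h<\delta\rho$, and with $\delta$ small enough that $c_{\rm cap}\delta^{-2}>1$. This gives $R(\cdot,T^+)>\rho^{-2}\ge b(T)$ on the caps. Any universal $c_{\rm cap}>0$ suffices; you do not need the constant $1-\eta$.

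There are also smaller issues in (ii) and (vi).

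In (ii), $X$ is not contained in $\Omega_\rho$: the collars $[0,L]\times S^2$ next to the caps have $R\approx h^{-2}>\rho^{-2}$. What is true, and all that is needed, is that $X$ is a compact subset of $\Omega(T)$.

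In (vi), you assert without argument that the whole $Y$-component has large curvature just before it disappears. Suppose the presurgery $Y$-component met $\Omega_\rho$. Then its discarded high-curvature part would be covered by necks and caps and would contribute only $S^3$, $\RP^3$ and $S^2\times S^1$ summands, so the prime summand $Y$ would be retained. Hence the disappearing $Y$-component misses $\Omega_\rho$. The derivative bound $|\partial_tR|\le CR^2$ from the canonical neighborhoods then gives $R\ge r^{-2}$ on all of it shortly before $T$.

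Also in (vi), closeness to an unspecified compact $\kappa$-solution gives $\sec>0$ only if the model has a uniform lower bound on $\sec/R$. It is cleaner to use the canonical-neighborhood types. $C$-components are $S^3$ or $\RP^3$, so a component with $|\pi_1|>2$ must be an $\epsilon$-round component, which has $\sec>0$. The paper sidesteps both points in (vi) by citing the proof of Dinkelbach--Leeb, Theorem~5.1.
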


% -----------------------------------------------------------------------------
% Figure 1: the geometric surgery operation
% -----------------------------------------------------------------------------
\begin{figure}[ht]
\centering
\begin{tikzpicture}[x=0.98cm,y=0.82cm,font=\scriptsize]

% Presurgery component.
\begin{scope}
\draw[body]
(.08,0)
.. controls (.14,.90) and (.92,1.38) .. (1.78,1.36)
.. controls (2.68,1.33) and (2.94,.77) .. (3.38,.57)
.. controls (3.78,.37) and (4.18,.42) .. (4.55,.42)
-- (6.30,.42)
.. controls (6.62,.36) and (6.86,.18) .. (7.02,.08)
-- (7.02,-.08)
.. controls (6.86,-.18) and (6.62,-.36) .. (6.30,-.42)
-- (4.55,-.42)
.. controls (4.18,-.42) and (3.78,-.37) .. (3.38,-.57)
.. controls (2.94,-.77) and (2.68,-1.33) .. (1.78,-1.36)
.. controls (.92,-1.38) and (.14,-.90) .. (.08,0);

\path[collar region]
(4.55,.42) -- (6.30,.42) -- (6.30,-.42) -- (4.55,-.42) -- cycle;

\path[shade,top color=black!2,bottom color=black!12]
(6.30,.42)
.. controls (6.62,.36) and (6.86,.18) .. (7.02,.08)
-- (7.02,-.08)
.. controls (6.86,-.18) and (6.62,-.36) .. (6.30,-.42)
-- cycle;

\draw[outline]
(4.55,.42) -- (6.30,.42)
.. controls (6.62,.36) and (6.86,.18) .. (7.02,.08)
-- (7.02,-.08)
.. controls (6.86,-.18) and (6.62,-.36) .. (6.30,-.42)
-- (4.55,-.42);

\path[fill=white]
(1.02,0)
.. controls (1.26,.21) and (1.95,.21) .. (2.22,0)
.. controls (1.95,-.21) and (1.26,-.21) .. cycle;
\draw[outline]
(1.02,0)
.. controls (1.26,-.21) and (1.95,-.21) .. (2.22,0);
\draw[hidden]
(1.02,0)
.. controls (1.26,.21) and (1.95,.21) .. (2.22,0);

\draw[outline]
(4.55,.42) arc[
  start angle=90,end angle=270,
  x radius=.14,y radius=.42
];
\draw[hidden]
(4.55,-.42) arc[
  start angle=-90,end angle=90,
  x radius=.14,y radius=.42
];

\draw[outline]
(6.30,.42) arc[
  start angle=90,end angle=270,
  x radius=.13,y radius=.42
];
\draw[hidden]
(6.30,-.42) arc[
  start angle=-90,end angle=90,
  x radius=.13,y radius=.42
];

\node[clear label,anchor=south west] at (.10,1.54)
  {\textsc{Before surgery}\quad $t\uparrow T$, $\mathcal M^-_T$};
\node[clear label] at (2.55,.04) {$U\setminus B_i$};
\node[clear label] at (4.55,.88) {$\Sigma_i$};
\node[clear label] at (6.30,.88) {$\{0\}\times S^2$};
\node[clear label] at (5.43,0) {$U\cap B_i$};
\node[clear label] at (6.66,-.62) {discarded horn};

\draw[outline]
(4.55,-.72) -- (4.55,-1.00)
(6.30,-.72) -- (6.30,-1.00)
(4.55,-.92) -- (6.30,-.92);
\node[clear label] at (5.43,-1.16) {$Lh$};

\node[clear label] at (3.34,-1.43)
  {strong $\delta$-neck at scale $h$};
\draw[leader] (4.04,-1.26) -- (4.75,-.42);
\end{scope}

\draw[process] (7.22,0) -- (8.12,0);
\node[clear label] at (7.67,.31) {Surgery};

% Postsurgery component.
\begin{scope}[shift={(8.35,0)}]
\draw[body]
(.08,0)
.. controls (.14,.90) and (.92,1.38) .. (1.78,1.36)
.. controls (2.68,1.33) and (2.94,.77) .. (3.38,.57)
.. controls (3.78,.37) and (4.18,.42) .. (4.55,.42)
-- (6.30,.42)
.. controls (7.42,.42) and (7.72,.20) .. (7.94,0)
.. controls (7.72,-.20) and (7.42,-.42) .. (6.30,-.42)
-- (4.55,-.42)
.. controls (4.18,-.42) and (3.78,-.37) .. (3.38,-.57)
.. controls (2.94,-.77) and (2.68,-1.33) .. (1.78,-1.36)
.. controls (.92,-1.38) and (.14,-.90) .. (.08,0);

\path[fill=white]
(1.02,0)
.. controls (1.26,.21) and (1.95,.21) .. (2.22,0)
.. controls (1.95,-.21) and (1.26,-.21) .. cycle;
\draw[outline]
(1.02,0)
.. controls (1.26,-.21) and (1.95,-.21) .. (2.22,0);
\draw[hidden]
(1.02,0)
.. controls (1.26,.21) and (1.95,.21) .. (2.22,0);

\path[collar region]
(4.55,.42) -- (6.30,.42) -- (6.30,-.42) -- (4.55,-.42) -- cycle;

\path[fill=rfgreen]
(6.30,.42)
.. controls (7.42,.42) and (7.72,.20) .. (7.94,0)
.. controls (7.72,-.20) and (7.42,-.42) .. (6.30,-.42)
-- cycle;

\draw[outline]
(4.55,.42) -- (6.30,.42)
.. controls (7.42,.42) and (7.72,.20) .. (7.94,0)
.. controls (7.72,-.20) and (7.42,-.42) .. (6.30,-.42)
-- (4.55,-.42);

\draw[outline]
(4.55,.42) arc[
  start angle=90,end angle=270,
  x radius=.14,y radius=.42
];
\draw[hidden]
(4.55,-.42) arc[
  start angle=-90,end angle=90,
  x radius=.14,y radius=.42
];

\draw[outline]
(6.30,.42) arc[
  start angle=90,end angle=270,
  x radius=.13,y radius=.42
];
\draw[hidden]
(6.30,-.42) arc[
  start angle=-90,end angle=90,
  x radius=.13,y radius=.42
];

\node[clear label,anchor=south west] at (.10,1.54)
  {\textsc{After surgery}\quad $T^+$, $Y(T^+)$};
\node[clear label] at (2.55,.04) {$U\setminus B_i$};
\node[clear label] at (4.55,.88) {$\Sigma_i$};
\node[clear label] at (6.30,.88) {$\partial D_i$};
\node[clear label] at (5.43,0) {$U\cap B_i$};
\node[clear label] at (7.08,0) {$D_i$};

\draw[outline]
(4.55,-.72) -- (4.55,-1.00)
(6.30,-.72) -- (6.30,-1.00)
(4.55,-.92) -- (6.30,-.92);
\node[clear label] at (5.43,-1.16) {$Lh$};
\end{scope}

\end{tikzpicture}
\caption{\footnotesize Ricci flow surgery near one cap. The collar $U\cap B_i=B_i\setminus\interior D_i=[0,L]\times S^2$ is unchanged. Surgery removes the horn beyond $\{0\}\times S^2$ and attaches the three-disk $D_i$ there; $Lh$ denotes the model length of the collar.}
\label{fig:ricci-surgery}
\end{figure}

\begin{proof}
We apply the existence result of \cite[Section~5.1]{DinkelbachLeeb2009} in the nonequivariant case. Fix $0<\varepsilon_1\le\varepsilon_1^{(5)}$, where $\varepsilon_1$ measures the quality of approximation of canonical neighborhoods by local models. The existence statement there, based on \cite[Proposition~77.2]{KleinerLott2008}, provides positive nonincreasing functions $r$ and $\bar\delta$ such that every positive nonincreasing function $\delta<\bar\delta$ determines a Ricci flow with $(r,\delta)$-cutoff defined for all time. Decrease $\delta$ further if necessary for the estimates below. \cite[Lemma~73.7]{KleinerLott2008} excludes accumulation of singular times on bounded intervals, proving (i).

The convergence in (ii) follows from \cite[Section~67]{KleinerLott2008}. The common region $X$ and the equality of the two metrics there are part of the surgery identification in \cite[Definition~68.1]{KleinerLott2008} and the paragraph following \cite[Definition~73.1]{KleinerLott2008}.

To obtain (iii), for the $i$-th cap choose $x_i^0\in\{0\}\times S^2$ in the neck parametrization of \cite[Lemma~72.24]{KleinerLott2008} and set $Q_i=R(x_i^0,T^-)$. The neck comparison extends to the cut sphere, which is chosen through a point $x_i$ satisfying $R(x_i,T^-)=h^{-2}$ by \cite[Lemma~71.1 and Definition~73.1]{KleinerLott2008}. Since $x_i$ and $x_i^0$ lie a uniformly bounded distance apart in the same strong $\delta$-neck, the neck comparison gives
\[
 Q_i h^2=1+O(\delta).
\]
Hence, for a universal $C>0$ and sufficiently small $\delta$, the fixed collar $[0,L]\times S^2$ lies in the region unchanged by surgery and
\[
 (1-C\delta)g_{\rm cyl}
 \le h^{-2}g(T^\pm)
 \le(1+C\delta)g_{\rm cyl}.
\]
Define $\eta(\delta)\to0$ as
\begin{equation}\label{eq:omega}
 \eta(\delta):=\max\Bigl\{
 \pi\sqrt2\bigl((1+C\delta)^{1/2}-1\bigr),
 L\bigl(1-(1-C\delta)^{1/2}\bigr)\Bigr\}.
\end{equation}

The cross-sections of $g_{\rm cyl}$ have diameter $\pi\sqrt2$, while every path from $\{0\}\times S^2$ to $\{L\}\times S^2$ has length at least $L$. Comparing lengths in the preceding quadratic-form estimate gives \eqref{eq:neck-lengths}. Since $2L>\pi\sqrt2$ and $\eta(\delta)\to0$, decreasing $\delta$ gives \eqref{eq:neck-inequality}.

The finitely many surgery caps lie in disjoint horns \cite[Section~67, Definition~73.1]{KleinerLott2008}. Decreasing $\delta$ keeps the collars in these horns, making the $B_i$ pairwise disjoint. Removing the interiors of the disjoint $3$-balls $D_i$ from connected $Y(T^+)$ leaves $U$ connected. By (ii), $U$ is the part of $X$ in $Y(T^+)$, proving (iii).

Part (iv) follows from \cite[Corollary~5.3(ii)]{DinkelbachLeeb2009}. Indeed, $Y$ is closed, orientable and irreducible, and $|G|>2$ excludes both $S^3$ and $\mathbb{RP}^3$. Thus, if $j_0<j_{\max}$, the last $Y$-component satisfies the extinction alternative in that corollary.

For (v), the scalar evolution equation $\partial_tR=\Delta R+2|\Ric|^2 \ge\Delta R+\frac23R^2$ and the maximum principle give $R\ge b$ on each smooth time interval \cite{Hamilton1982}. At a singular time $T$, smooth convergence gives $R(\cdot,T^-)\ge b(T)$ on $\Omega(T)$, and the bound is unchanged on $X$. If the postsurgery slice is nonempty, a retained component contains a point $q\in\Omega_\rho$, so
\[
 b(T)\le R(q,T^-)\le\rho^{-2}.
\]

The compact truncated standard cap has a positive scalar curvature lower bound. Hence \cite[Lemma~72.20 and its proof, together with Lemma~72.24]{KleinerLott2008}, and $Q_ih^2=1+O(\delta)$, give a universal $c_{\rm cap}>0$ such that $R(\cdot,T^+)\ge c_{\rm cap}h^{-2}$ on each $D_i$. Decrease $\delta$ so that $c_{\rm cap}\delta^{-2}>1$. Since $h<\delta\rho$ by \cite[Lemma~71.1]{KleinerLott2008},
\[
 R(\cdot,T^+)\ge c_{\rm cap}h^{-2}
 >c_{\rm cap}\delta^{-2}\rho^{-2}
 >\rho^{-2}\ge b(T).
\]
Consequently the estimate continues after $T$. Induction over the finitely many singular times in $[0,t]$ proves the two scalar curvature bounds. Since $b(t)\to\infty$ as $t\uparrow T_0$, \cite[Lemma~81.1]{KleinerLott2008} shows that the flow is empty for $t\ge T_0$. This proves (v).

By (i) and (v), the flow has finitely many singular times and vanishes past $T_0$, so $j_0 < j_{\max} < \infty$ in (iv) and the final $Y$-component goes extinct by time $T_0$. Shortly before extinction, the proof of \cite[Theorem~5.1]{DinkelbachLeeb2009} shows this component either has $\sec>0$ or is diffeomorphic to $S^2\times S^1$, $S^3$, $\RP^3$, or $\RP^3\#\RP^3$. Since none of these topologies has finite $|\pi_1|>2$, it must have $\sec>0$ at some time $t_*$, proving (vi).
\end{proof}

The surgery identification and the regions $D_i\subset B_i$, $\Sigma_i$, and $U$ used in Proposition~\ref{prop:ricci-input}(ii)--(iii) are illustrated in Figure~\ref{fig:ricci-surgery}.

\subsection{Systolic estimates}\label{subsec:systolic}

\begin{proposition}\label{prop:smooth-systole}
Let $g(t)$, $t\in[t_-,t_+)$, be a smooth Ricci flow on a closed orientable three-manifold $M$ with $\pi_1(M)\ne1$, and set $s(t)=\sys(g(t))$. Then $s^2$ is locally Lipschitz and
\begin{equation}\label{eq:smooth-systole}
 s(t_2)^2\ge s(t_1)^2-4\pi^2(t_2-t_1),
 \qquad t_-\le t_1\le t_2<t_+.
\end{equation}
\end{proposition}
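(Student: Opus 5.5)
The plan is to prove the estimate with a Hamilton-type argument for a minimum over a family, by computing how the length of a fixed noncontractible geodesic changes along the flow. Fix $t_1\in[t_-,t_+)$ and a compact subinterval $[t_1,t_+']\subset[t_-,t_+)$. On this subinterval the curvature is uniformly bounded, $|\Ric|\le K$, so all metrics are uniformly equivalent:
\[
e^{-2K|t-t'|}g(t')\le g(t)\le e^{2K|t-t'|}g(t').
\]
This gives $s(t)\le e^{K|t-t'|}s(t')$, so $s$, and hence $s^2$, is locally Lipschitz. It is also bounded below by a positive constant, since noncontractible curves have length bounded below at each time. By a standard lemma (a Lipschitz function whose upper Dini derivative from the left, or right, satisfies $D^+ f\ge -4\pi^2$ everywhere obeys the integrated inequality), it then suffices to prove a one-sided derivative bound at each time $t_0$:
\[
\limsup_{h\downarrow0}\frac{s(t_0)^2-s(t_0-h)^2}{h}\ge-4\pi^2 .
\]
Equivalently, one can prove $\liminf_{h\downarrow0}\bigl(s(t_0+h)^2-s(t_0)^2\bigr)/h\ge-4\pi^2$ for the forward difference. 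I will work with the backward form, using a curve that realizes the systole at time $t_0$.

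At time $t_0$, let $\gamma$ be a shortest noncontractible closed curve for $g(t_0)$. It is a smooth closed geodesic, of length $s(t_0)$, parametrized by arclength. Since $s(t)\le L_{g(t)}(\gamma)$ for all $t$, with equality at $t_0$,
\[
s(t_0)^2-s(t_0-h)^2\ge s(t_0)^2-L_{g(t_0-h)}(\gamma)^2 .
\]
Dividing by $h$ and letting $h\to0$, the right side tends to $\frac{d}{dt}\big|_{t_0}L_{g(t)}(\gamma)^2=-2s(t_0)\int_\gamma\Ric(\gamma',\gamma')\,ds$. So it is enough to show
\[
s\int_\gamma\Ric(\gamma',\gamma')\,ds\le 2\pi^2 .
\]

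The key step, and the main obstacle, is this Ricci integral bound for a length-minimizing noncontractible geodesic in dimension three. The plan is to use the second variation of length. Choose an orthonormal parallel frame $e_1,e_2$ of the normal bundle along $\gamma$. Since $M$ is orientable, the holonomy of the normal bundle around $\gamma$ is a rotation.

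Test with the variation fields $V_i=\sin(\pi s/\ell)\,e_i$, where $\ell=s(t_0)$. These vanish at the basepoint, so they define variations through closed curves with a corner; the varied curves are still noncontractible and homotopic to $\gamma$. Because $\gamma$ minimizes length in its free homotopy class, each second variation is nonnegative. For a variation fixing the basepoint this gives
\[
0\le\int_0^\ell\Bigl(|V_i'|^2-\langle \Rm(V_i,\gamma')\gamma',V_i\rangle\Bigr)\,ds .
\]
Summing over $i=1,2$ and using $|e_1\wedge\gamma'|$ sectional curvatures adding to $\Ric(\gamma',\gamma')$, I get
\[
\int_0^\ell\sin^2(\pi s/\ell)\,\Ric(\gamma',\gamma')\,ds\le 2\frac{\pi^2}{\ell^2}\cdot\frac{\ell}{2}=\frac{\pi^2}{\ell}.
\]

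The difficulty is that this bounds a weighted integral, not the plain integral $\int_\gamma\Ric$. To remove the weight, average over the choice of basepoint: replace $s$ by $s-a$ and integrate $a$ over $[0,\ell]$. Since $\frac1\ell\int_0^\ell\sin^2(\pi(s-a)/\ell)\,da=\tfrac12$, this yields
\[
\tfrac12\int_\gamma\Ric(\gamma',\gamma')\,ds\le\frac{\pi^2}{\ell},
\]
that is, $\ell\int_\gamma\Ric\le2\pi^2$, which is exactly the needed bound. I must check that these fixed-basepoint piecewise variations are admissible competitors. They are, because a curve with a corner is still a noncontractible loop, and a shortest one exists, so $L\ge\ell$ holds along the variation.

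With the derivative bound established at every $t_0$, the Dini-derivative lemma applied on $[t_1,t_2]$ gives \eqref{eq:smooth-systole}.
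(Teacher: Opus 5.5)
Your proof is correct, but you obtain the key bound $\ell\int_\gamma\Ric(\dot\gamma,\dot\gamma)\le2\pi^2$ by a different route from the paper.

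\textbf{The paper's route.} It uses orientability to write the normal holonomy along the systolic geodesic as a rotation by some $\theta\in[-\pi,\pi]$. It then tests the index form with two \emph{periodic} orthonormal normal fields, obtained by rotating a parallel frame back at constant speed $-\theta/\ell$. One summation gives $\int_\gamma\Ric(\dot\gamma,\dot\gamma)\le2\theta^2/\ell\le2\pi^2/\ell$.

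\textbf{Your route.} You use the fields $\sin(\pi(s-a)/\ell)\,e_i$. They vanish at the basepoint $\gamma(a)$, so holonomy is irrelevant: the varied loops have a corner at $\gamma(a)$ but stay noncontractible competitors. You then remove the weight by averaging over $a$, using $\frac1\ell\int_0^\ell\sin^2(\pi(s-a)/\ell)\,da=\frac12$. Your computation of $\int_\gamma\sin^2\Ric\le\pi^2/\ell$ and the Fubini step are right, and they give the same sharp constant in dimension three.

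\textbf{What each approach buys.}
\begin{itemize}
\item Yours never uses orientability, so the estimate holds for nonorientable $M$ as well.
\item Yours is uniform in dimension, giving $\ell\int_\gamma\Ric\le(n-1)\pi^2$ in dimension $n$.
\item The paper's is a single test-field computation.
\item The paper's keeps holonomy information, $\ell\int_\gamma\Ric\le2\theta^2$. This is sharper when the holonomy angle is small; trivial holonomy gives the Synge-type bound $\int_\gamma\Ric\le0$.
\end{itemize}

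\textbf{Time derivative.} This part matches the paper in substance. The paper differentiates $s^2$ at its differentiability points. You bound the lower backward Dini derivative at every interior time. For a Lipschitz function both reduce to $(s^2)'\ge-4\pi^2$ almost everywhere.

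\textbf{A minor imprecision.} The forward-difference version is not ``equivalent'' pointwise to the backward one. It also cannot be obtained from a curve realizing the systole at $t_0$, since that curve only bounds $s(t_0+h)$ from above. You only use the backward form, so this does not affect your argument.
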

A dimension-dependent length-decay estimate for a fixed free homotopy class was proved by Ilmanen--Knopf \cite[Lemma~3]{IlmanenKnopf}. Proposition~\ref{prop:smooth-systole} is its sharp three-dimensional version for the homotopy systole; equality is attained by the shrinking round flow on $\RP^3$.
\begin{proof}
On compact time intervals, the metrics are uniformly bilipschitz; hence $s^2$ is locally Lipschitz. At a differentiability time $t_0$, compactness gives a systolic geodesic $\gamma:[0,\ell]\to M$ parametrized by arclength $\tau$, where $\ell=s(t_0)$, and we write $\dot\gamma=d\gamma/d\tau$. For every periodic normal field $V$, small variations remain noncontractible, hence
\begin{equation}\label{eq:second-variation}
 I(V,V)=\int_0^\ell
 \left(|D_\tau V|^2-
 \langle\Rm(V,\dot\gamma)\dot\gamma,V\rangle\right)\,d\tau\ge0.
\end{equation}
Since $M$ is orientable, we can find a parallel oriented orthonormal frame $(E_1,E_2)$ of the normal bundle along $\gamma$. Its holonomy is rotation through an angle $\theta\in[-\pi,\pi]$, so $(E_1(\ell),E_2(\ell))=(E_1(0),E_2(0))\operatorname{Rot}_\theta$. Consider the periodic orthonormal normal fields $(V_1,V_2)(\tau) =(E_1,E_2)(\tau)\operatorname{Rot}_{-\theta\tau/\ell}$, then
\[
 \sum_{i=1}^2\int_0^\ell|D_\tau V_i|^2\,d\tau
 =\frac{2\theta^2}{\ell}\le\frac{2\pi^2}{\ell} \xRightarrow{\text{Sum }\eqref{eq:second-variation} \text{ for } V_1,V_2} \int_0^\ell\Ric(\dot\gamma,\dot\gamma)\,d\tau \le\frac{2\pi^2}{\ell}.
\]
Since $s(t)^2\le L_{g(t)}(\gamma)^2$ with equality at $t_0$,
\begin{align*}
 (s^2)'(t_0)=\left.\frac{d}{dt}\right|_{t=t_0} L_{g(t)}(\gamma)^2 =2\ell\left.\frac{d}{dt}\right|_{t=t_0} L_{g(t)}(\gamma) =-2\ell\int_0^\ell \Ric(\dot\gamma,\dot\gamma)\,d\tau \ge-4\pi^2.
\end{align*}
Integration gives \eqref{eq:smooth-systole}.
\end{proof}
\begin{figure}[ht]
\centering
\begin{tikzpicture}[x=0.88cm,y=0.88cm,font=\scriptsize]

% Upper row: the shortening argument at the postsurgery time T^+.
\begin{scope}[shift={(0,2.65)}]
\systolepostbody
\coordinate (p) at (4.265,.247);
\coordinate (q) at (4.265,-.247);
\draw[loop]
(p) .. controls (3.35,.40) and (1.08,.48) .. (.55,0)
.. controls (1.08,-.48) and (3.35,-.40) .. (q);
\draw[loop]
(p) .. controls (4.92,.27) and (5.82,.20) .. (6.22,0)
.. controls (5.82,-.20) and (4.92,-.27) .. (q);
\draw[shortcut]
(p) arc[start angle=135,end angle=225,
x radius=.12,y radius=.35];

\node[point,minimum size=4pt] at (p) {};
\node[point,minimum size=4pt] at (q) {};
\node[clear label,anchor=east] at (4.10,.30) {$p$};
\node[clear label,anchor=east] at (4.10,-.30) {$q$};
\node[clear label,loop label] at (.70,.13) {$\gamma$};
\node[clear label,loop label] at (5.72,.64) {$\alpha$};
\draw[leader] (5.62,.50) -- (5.39,.24);
\node[clear label,shortcut label] at (3.60,-.49) {$\alpha'$};
\draw[leader] (3.72,-.40) -- (4.24,-.10);
\node[clear label] at (4.42,.79) {$\Sigma_i$};
\node[clear label] at (5.86,-.66) {$D_i$};
\node[clear label,anchor=south west] at (.20,1.25)
{$T^+$: cap-entering loop};
\end{scope}

\begin{scope}[shift={(7.85,2.65)}]
\systolepostbody
\coordinate (ph) at (4.265,.247);
\coordinate (qh) at (4.265,-.247);
\draw[shortcut]
(ph) .. controls (3.35,.40) and (1.08,.48) .. (.55,0)
.. controls (1.08,-.48) and (3.35,-.40) .. (qh)
arc[start angle=225,end angle=135,
x radius=.12,y radius=.35];
\node[clear label,shortcut label] at (.72,.13) {$\widehat\gamma$};
\node[clear label,shortcut label] at (3.00,-1.27)
{$\widehat\gamma\subset U$};
\node[clear label] at (5.86,-.66) {$D_i$};
\node[clear label,anchor=south west] at (.20,1.25)
{$T^+$: shortened loop in $U$};
\end{scope}

\draw[process] (6.72,2.65) -- (7.60,2.65);
\node[clear label,align=center] at (7.16,3.48)
{replace $\alpha$ by $\alpha'$};
\draw[leader] (7.16,2.85) -- (7.16,3.28);

% Lower row: comparison across surgery.
\begin{scope}[shift={(0,-1.20)}]
\systoleprebody
\coordinate (pt) at (4.265,.247);
\coordinate (qt) at (4.265,-.247);
\draw[shortcut]
(pt) .. controls (3.35,.40) and (1.08,.47) .. (.55,0)
.. controls (1.08,-.47) and (3.35,-.40) .. (qt)
arc[start angle=225,end angle=135,
x radius=.12,y radius=.35];
\node[clear label,shortcut label] at (.72,.13) {$\widehat\gamma$};
\node[clear label,shortcut label] at (3.00,-1.24)
{$\widehat\gamma\subset U\subset Y(t)$};
\node[clear label] at (4.78,-.66) {$U\cap B_i$};
\node[clear label,anchor=south west] at (.20,1.22)
{$t\uparrow T$: presurgery};
\end{scope}

\begin{scope}[shift={(7.85,-1.20)}]
\systolepostbody
\coordinate (pu) at (4.265,.247);
\coordinate (qu) at (4.265,-.247);
\draw[shortcut]
(pu) .. controls (3.35,.40) and (1.08,.47) .. (.55,0)
.. controls (1.08,-.47) and (3.35,-.40) .. (qu)
arc[start angle=225,end angle=135,
x radius=.12,y radius=.35];
\node[clear label,shortcut label] at (.72,.13) {$\widehat\gamma$};
\node[clear label,shortcut label] at (3.00,-1.24)
{$\widehat\gamma\subset U$};
\node[clear label] at (4.78,-.66) {$U\cap B_i$};
\node[clear label,anchor=south west] at (.20,1.22)
{$T^+$: postsurgery};
\end{scope}

\draw[process] (6.72,-1.20) -- (7.60,-1.20);
\node[clear label,align=center] at (7.16,-.40)
{$g(T^-)|_U=g(T^+)|_U$};
\draw[leader] (7.16,-.60) -- (7.16,-.98);

\node[clear label] at (7.16,-3.10)
{$L(\alpha)\ge 2(L-\eta)h>(\pi\sqrt2+\eta)h\ge L(\alpha')$};
\node[clear label] at (7.16,-3.78)
{$\displaystyle
\limsup_{t\uparrow T}s(t)
\le L_{g(T^-)}(\widehat\gamma)
=L_{g(T^+)}(\widehat\gamma)
\le s(T^+)+\varepsilon$};

\end{tikzpicture}
\caption{\footnotesize Systolic comparison. \emph{Top:} the arc $\alpha$ entering a cap, with $\alpha\cap\Sigma_i=\{p,q\}$, is replaced at $T^+$ by the shorter arc $\alpha'\subset\Sigma_i$ connecting $p$ and $q$. \emph{Bottom:} under the surgery identification, the region $U\Subset\Omega(T)$ (including the collar $U\cap B_i$) and the loop $\widehat\gamma\subset U$ are unchanged across $T$.}
\label{fig:systolic-surgery-comparison}
\end{figure}
\begin{proposition}\label{prop:surgery-systole}
Let $T$ be a singular time for which $Y(T^+)$ exists. Then
\begin{equation}\label{eq:surgery-systole}
s(T^+)\ge\limsup_{t\uparrow T}s(t).
\end{equation}
\end{proposition}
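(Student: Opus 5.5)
Fix $\varepsilon>0$ and let $\gamma$ be a noncontractible closed curve in $Y(T^+)$ with $L_{g(T^+)}(\gamma)\le s(T^+)+\varepsilon$. The plan is to replace $\gamma$ by a noncontractible loop $\widehat\gamma\subset U$ that is no longer than $\gamma$. Since $g(T^+)|_U=g(T^-)|_U$ by Proposition~\ref{prop:ricci-input}(ii), and $g(t)\to g(T^-)$ smoothly on the compact set $U\Subset\Omega(T)$, we get $L_{g(t)}(\widehat\gamma)\to L_{g(T^-)}(\widehat\gamma)=L_{g(T^+)}(\widehat\gamma)$. The remaining requirement is that $\widehat\gamma$, viewed as a loop in $Y(t)$ for $t<T$, is still noncontractible. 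Then $\limsup_{t\uparrow T}s(t)\le L_{g(T^+)}(\widehat\gamma)\le s(T^+)+\varepsilon$, and letting $\varepsilon\to0$ gives \eqref{eq:surgery-systole}.

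\emph{Construction of $\widehat\gamma$.} Put $\gamma$ in general position with respect to the spheres $\Sigma_i$, so that it meets them transversally in finitely many points. Any maximal subarc $\alpha$ of $\gamma$ lying in $\interior B_i$ has both endpoints $p,q\in\Sigma_i$ and enters $D_i$ (otherwise it stays in the collar and no modification is needed). Such an arc must cross the collar $[0,L]\times S^2$ twice. By \eqref{eq:neck-lengths} this gives $L(\alpha)\ge2(L-\eta)h$. Replace $\alpha$ by a minimizing path $\alpha'$ in $(\Sigma_i,g(T^+)|_{\Sigma_i})$ from $p$ to $q$; then $L(\alpha')\le(\pi\sqrt2+\eta)h<L(\alpha)$ by \eqref{eq:neck-inequality}. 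The subarcs are disjoint and finite in number, so after all replacements we obtain a loop $\widehat\gamma\subset U$ with $L(\widehat\gamma)\le L(\gamma)$.

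\emph{Homotopy classes.} Each $B_i$ is a three-ball, so $\alpha$ and $\alpha'$ are homotopic rel endpoints inside $B_i$. Hence $\widehat\gamma$ is freely homotopic to $\gamma$ in $Y(T^+)$ and is noncontractible there. Next I need the inclusion-induced map $\pi_1(U)\to\pi_1(Y(T^+))$ to be an isomorphism, which holds because $U$ is $Y(T^+)$ with disjoint open balls removed. On the presurgery side, $U$ sits inside a time slice for $t<T$ diffeomorphic to the component of $M_{j-1}$ containing it. I expect that component to be $M_{j-1}^{(0)}\cong Y$, by Proposition~\ref{prop:ricci-input}(iv), because $\pi_1(U)\cong\pi_1(Y)\neq1$ while all other components are spheres.

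\emph{Main obstacle.} The delicate step is showing that $\widehat\gamma$ is noncontractible in the presurgery manifold $M_{j-1}^{(0)}$. Surgery on $M_{j-1}^{(0)}$ produces $Y(T^+)$ together with $S^3$ components and discarded pieces, so $M_{j-1}^{(0)}$ is recovered from $Y(T^+)$ by connected sums with $S^3$'s, $S^2\times S^1$'s, and discarded components. Under a connected sum the fundamental group is a free product, so $\pi_1(Y(T^+))\cong\pi_1(U)$ injects into $\pi_1(M_{j-1}^{(0)})$. To make this rigorous I would use the surgery description from \cite{KleinerLott2008,DinkelbachLeeb2009}: the cut spheres $\{0\}\times S^2$ bound pieces whose union with the caps recovers the prior topology via connected sum. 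Some care is needed with neck spheres that join two boundary spheres of $U$ (the $S^2\times S^1$ summands), but free products still preserve injectivity. Granting this injectivity, noncontractibility of $\widehat\gamma$ in $Y(t)$ for $t$ close to $T$ follows, and the proof concludes as above.
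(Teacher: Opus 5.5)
Your proposal is correct and follows essentially the same route as the paper. You replace each cap-entering arc by a shortest path on the outer neck sphere $\Sigma_i$ to obtain $\widehat\gamma\subset U$, pass its length to $t\uparrow T$ via $g(T^-)|_U=g(T^+)|_U$ and smooth convergence on $U\Subset\Omega(T)$, and get noncontractibility from a van Kampen argument along the spheres $\partial U$. The one difference is minor: you use only that $\pi_1(U)$ is a free factor of $\pi_1(C)$, hence injects, which is all that is needed and already rules out $C\cong S^3$. The paper instead cuts $C$ directly along $\partial U$ to get $\pi_1(C)\cong\pi_1(U)*H$, which spares you the detour through the connected-sum description of surgery, and then uses $|\pi_1(C)|=|G|$ to force $H=1$.
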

\begin{proof}
Set $U:=Y(T^+)\setminus\bigcup_i\interior D_i$. By Proposition~\ref{prop:ricci-input}(ii), (iii), $U$ is connected and compactly contained in $\Omega(T)$. Fix $\varepsilon>0$. After a $C^1$-small perturbation, we can choose a smooth noncontractible loop $\gamma\subset Y(T^+)$, transverse to every $\Sigma_i$, with $L_{g(T^+)}(\gamma)\le s(T^+)+\varepsilon/2$.

Since the $B_i$ are three-balls, $\gamma$ is not contained
in any $B_i$. Hence $\gamma^{-1}(\interior B_i)\ne S^1$ for every $i$. For each component $J$ of $\gamma^{-1}(\interior B_i)$ whose image meets $\interior D_i$, let $\alpha=\gamma|_{\overline J}$. Its endpoints lie on $\Sigma_i$, and it contains two disjoint subarcs joining $\partial D_i$ to $\Sigma_i$. Let $\alpha'\subset\Sigma_i$ be a shortest path with the same endpoints. By Proposition~\ref{prop:ricci-input}(iii), \eqref{eq:neck-lengths} and \eqref{eq:neck-inequality},
\begin{align*}
 &L_{g(T^+)}(\alpha)
 \ge 2(L-\eta)h,\\
 &L_{g(T^+)}(\alpha')
 \le(\pi\sqrt2+\eta)h,\\
\Longrightarrow & L_{g(T^+)}(\alpha')-L_{g(T^+)}(\alpha)
 \le\bigl(\pi\sqrt2+\eta-2(L-\eta)\bigr)h<0.
\end{align*}
Since $B_i$ is a three-ball, replacing $\alpha$ by $\alpha'$ is a homotopy relative to the endpoints. Performing all such replacements and smoothing the finitely many corners with length increase less than $\varepsilon/3$ gives a loop $\widehat\gamma\subset U$, such that
\[
 L_{g(T^+)}(\widehat\gamma)\le s(T^+)+\varepsilon.
\]
In particular, $\widehat\gamma$ is noncontractible in $Y(T^+)$.

Capping $U$ by the three-balls $D_i$ gives $Y(T^+)\cong Y$, and the inclusion $U\hookrightarrow Y(T^+)$ induces an isomorphism on fundamental groups. For $t<T$ sufficiently close to $T$, the set $U\Subset\Omega(T)$ lies in a single presurgery component $C$. Cutting $C$ along $\partial U$ and applying van Kampen's theorem gives $\pi_1(C)\cong\pi_1(U)*H$ for some group $H$. By Proposition~\ref{prop:ricci-input}(iv), $C$ is diffeomorphic to either $Y$ or $S^3$. Since $\pi_1(U)\cong\pi_1(Y)\ne1$, the latter is impossible; hence $C=Y(t)$. As $|\pi_1(C)|=|\pi_1(U)|=|G|$, we have $H=1$, and therefore $\pi_1(U) \cong\pi_1(Y(t))$, so $\widehat\gamma$ is noncontractible in $Y(t)$.
Finally, Proposition~\ref{prop:ricci-input}(ii)--(iii) gives smooth convergence on $U$ and $g(T^-)|_U=g(T^+)|_U$. Hence
\begin{align*}
 \limsup_{t\uparrow T}s(t) \le\lim_{t\uparrow T}L_{g(t)}(\widehat\gamma) =L_{g(T^+)}(\widehat\gamma) \le s(T^+)+\varepsilon.
\end{align*}
Letting $\varepsilon\downarrow0$ proves \eqref{eq:surgery-systole}.
\end{proof}
The shortening of $\gamma$ outside the caps and the comparison of $\widehat\gamma$ across the singular time are illustrated in Figure~\ref{fig:systolic-surgery-comparison}.

By Proposition~\ref{prop:ricci-input}(i) and (iv), the $Y$-component uniquely survives the finitely many surgeries preceding any time where $Y(t)$ exists. Since $s(t)^2+4\pi^2t$ is nondecreasing on smooth intervals (Proposition~\ref{prop:smooth-systole}) and across surgeries (Proposition~\ref{prop:surgery-systole}), starting from $s(0)=s_0$ yields
\begin{equation}\label{eq:global-systole}
s(t)^2\ge s_0^2-4\pi^2t
\end{equation}
at every nonsingular time for which $Y(t)$ exists.

\section{Proofs of the main results}\label{sec:proofs}

\begin{proof}[Proof of Theorem~\ref{thm:gap}]
Set $\mu_0=\min_YR_g$ and $s_0=\sys(g)$. Suppose for contradiction that
\begin{equation}\label{eq:gap-assumption}
 \mu_0>0,\qquad \mu_0s_0^2\ge6\pi^2.
\end{equation}
After scaling $g$ to be normalized, Proposition~\ref{prop:ricci-input} applies with $k=g$. For every nonsingular $t<T_0=3/(2\mu_0)$ for which $Y(t)$ exists, Proposition~\ref{prop:ricci-input}(v), \eqref{eq:global-systole}, and \eqref{eq:gap-assumption} give
\begin{equation}\label{eq:product-bound}
 (\min_{Y(t)}R_{g(t)})s(t)^2
 \ge\frac{\mu_0(s_0^2-4\pi^2t)}{1-\frac23\mu_0t}
 \ge6\pi^2.
\end{equation}
Proposition~\ref{prop:ricci-input}(vi) supplies such a time $t_*<T_0$ with $\sec_{g(t_*)}>0$.

Continue $g(t_*)$ by its maximal smooth Ricci flow $\widehat g(t)$, retaining the notation $s(t)=\sys(\widehat g(t))$, and set $\mu(t)=\min_YR_{\widehat g(t)}$. The functions $\mu$ and $s^2$ are locally Lipschitz, and at almost every time the maximum principle and Proposition~\ref{prop:smooth-systole} give $\mu'\ge\frac23\mu^2$, and $(s^2)'\ge-4\pi^2$. Consequently,
\[
 (\mu s^2)'\ge\frac23\mu^2s^2-4\pi^2\mu =\frac23\mu(\mu s^2-6\pi^2).
\]
Since \eqref{eq:product-bound} gives $\mu(t_*)s(t_*)^2\ge6\pi^2$, this yields $\mu(t)s(t)^2\ge6\pi^2$ throughout the smooth flow.

Hamilton's convergence theorem now gives a round limit $\bar g$, after scaling and pullback by diffeomorphisms \cite[Sections~14--17]{Hamilton1982}. Since $(\min R)\sys^2$ is scale invariant and continuous under smooth convergence,
\[
 (\min_YR_{\bar g})\sys(\bar g)^2\ge6\pi^2.
\]
Normalize $\sec_{\bar g}=1$. Then $\sys(\bar g)\ge\pi$. But $(Y,\bar g)=S^3/\Gamma$ with $|\Gamma|>2$, so choose $a\in\Gamma\setminus\{I,-I\}$. Freeness and $a\ne-I$ give an $x\in S^3$ with $0<d_{S^3}(x,ax)<\pi$; the minimizing geodesic from $x$ to $ax$ projects to a noncontractible loop of length less than $\pi$. This contradiction proves the theorem.
\end{proof}

\begin{proof}[Proof of Corollary~\ref{cor:spherical}]
Theorem~\ref{thm:gap} handles $|G|>2$. If $|G|=2$, then $Y\cong\RP^3$, and Proposition~\ref{prop:systolic}(ii) gives the inequality and rigidity. Conversely, the unit round $\RP^3$ has scalar curvature $6$ and systole $\pi$.
\end{proof}

\begin{proof}[Proof of Theorem~\ref{thm:classification}]
For $\kappa=\kappa(F)\le\sqrt{3/2}$, the conformal metric from \cite[ proof of Proposition~2]{ChodoshLi} has $\sec\ge0$. By Hamilton \cite[Theorem~1.2]{Hamilton1986}, $X$ is a spherical space form, a compact quotient of $S^2\times\R$, or a flat manifold.

The flat case is excluded because Bieberbach's theorem \cite[Theorem~3.3.1]{Wolf2011} yields a finite cover $T^3\to X$, contradicting the positive scalar curvature from Proposition~\ref{prop:bridge} via \cite[Theorem~2 and Corollary~2]{SchoenYau1979}.

The orientable quotients of $S^2\times\R$ are $S^2\times S^1$ and $\RP^3\#\RP^3$ \cite[Section~4, pp.~457--458]{Scott1983}. The embedded projective plane in $\RP^3\#\RP^3$ with Propositions~\ref{prop:bridge} and \ref{prop:systolic}(ii) gives
\[
 6\pi^2\le(\inf_XR_{\widetilde g})\sys(\widetilde g)^2\le6\pi^2,
\]
whose rigidity forces $X\cong\RP^3$ and gives a contradiction leaving only $S^2\times S^1$.

If $X$ is spherical with $\pi_1(X)\ne1$, Proposition~\ref{prop:bridge} and Corollary~\ref{cor:spherical} force equality, and rigidity yields $X\cong\RP^3$.

When $\kappa<\sqrt{3/2}$, $\sec>0$ \cite[Proposition~2]{ChodoshLi} makes $X$ spherical \cite[Theorem~1.1]{Hamilton1982}; the strict inequality in Proposition~\ref{prop:bridge} contradicts Corollary~\ref{cor:spherical} if $\pi_1(X)\ne1$, forcing $X\cong S^3$.
\end{proof}

\begin{proof}[Proof of Theorem~\ref{thm:main}]
Let $m\in\{2,3\}$ and $\kappa=\kappa(F)$. By Proposition~\ref{prop:systolic} (applying (ii) to standard $\RP^2\subset\RP^3$), every metric $h$ on $\RP^m$ satisfies
\[
 \bigl(\inf_{\RP^m}R_h\bigr)\sys(h)^2\le m(m-1)\pi^2.
\]
If $\kappa^2<2m/(m+1)$, this contradicts Proposition~\ref{prop:bridge}; thus $\kappa^2\ge2m/(m+1)$. At equality $\kappa^2=2m/(m+1)$, both propositions achieve equality: $\widetilde g$ is round and attains the lower bound in \eqref{eq:conformal}, forcing $|F|\equiv1$. Equalities in \eqref{eq:bridge} and \eqref{eq:defect} then yield
\[
 |F|\equiv1,\qquad g\ \text{is round},\qquad
 H=-mF,\qquad \text{\CYRZH}=\kappa^2,\qquad
 \sys(g)=\frac{2\pi}{\kappa}.
\]
Furthermore, $|A_F(v,v)|^2\le\kappa^2$ and $\text{\CYRZH}=\kappa^2$ imply $|A_F(v,v)|=\kappa$ for all unit $v$. Each prime $g$-geodesic has length $2\pi/\kappa$, and its Euclidean image has constant curvature $\kappa$ and total curvature $2\pi$, forming a plane circle by Fenchel equality. Thus $F$ is planar-geodesic, which Little and Sakamoto classify as the Veronese embedding \eqref{eq:veronese} \cite{Little,Sakamoto}; the converse follows from direct computation.
\end{proof}

\begin{proof}[Proof of Corollary~\ref{cor:spherical-immersion}]
If $\kappa(F)^2<3/2$, the strict inequality in Proposition~\ref{prop:bridge} contradicts Corollary~\ref{cor:spherical}, so $\kappa(F)^2\ge3/2$. At equality, Proposition~\ref{prop:bridge} and Corollary~\ref{cor:spherical} force $Y\cong\RP^3$, and Theorem~\ref{thm:main} yields the Veronese immersion.
\end{proof}

\end{document}